\theoremstyle{plain}
\newtheorem{lem}{Lemma}[section]
\newtheorem{thm}[lem]{Theorem}
\newtheorem{cor}[lem]{Corollary}
\theoremstyle{definition}
\newtheorem{rem}[lem]{Remark}
\newtheorem{defn}[lem]{Definition}
\newtheorem*{st}{\textnormal{\textbf{Structure Theorem}}}
\newtheorem*{nt}{\textnormal{\textbf{Notation}}}
\theoremstyle{remark}
\numberwithin{equation}{section} \thispagestyle{empty} \voffset
\begin{document}
\baselineskip 15truept

\hfill{\it Accepted for publication in Mathematica Slovaca}\vspace{.21in}

\title{Zero-divisor graphs of lower dismantlable lattices-I}
\date{}

\dedicatory{Dedicated to
Professor N. K. Thakare on his $77^{th}$ birthday}

\author[Avinash Patil, B. N. Waphare, V. V. Joshi \and H. Y. Pourali] %
{Avinash Patil*, B. N. Waphare**, V. V. Joshi** \and H. Y. Pourali**}

\newcommand{\acr}{\newline\indent}

\address{\llap{*\,}Department of Mathematics\acr
                   Garware College of Commerce\acr
                   Karve Road, Pune-411004\acr
                   India.}
\email{avipmj@gmail.com}

\address{\llap{**\,}Department of Mathematics\acr
                  Savitribai Phule Pune Universitye\acr
                   Pune-411007\acr
                   India.}

\email{waphare@yahoo.com; bnwaph@math.unipune.ac.in}
\email{vvj@math.unipune.ac.in; vinayakjoshi111@yahoo.com}
\email{hosseinypourali@gmail.com} 
\begin{abstract}In this paper, we study the zero-divisor graphs
of a subclass of dismantlable lattices. These graphs are characterized in terms of the non-ancestor graphs of rooted
trees.\end{abstract} \maketitle \noindent
{\bf Keywords:} Dismantlable lattice, adjunct element,
adjunct representation, zero-divisor graph, cover graph, incomparability graph.\\
{\bf MSC(2010):} {Primary $05$C$25$, Secondary $05$C$75$}.

\section{Introduction}
Beck \cite{2} introduced the concept  of zero-divisor graph of a
commutative ring $R$ with unity as follows. Let $G$ be a simple
graph whose vertices are the elements of $R$ and two vertices $x$
and $y$ are adjacent if $xy = 0$. The graph $G$ is known as the
\textit{zero-divisor graph} of $R$. He was mainly interested in the
coloring of this graph. This concept is well studied in
algebraic structures such as rings, semigroups, lattices,
semilattices as well as in ordered structures such as posets and
qosets; see Anderson et al. \cite{DP}, Alizadeh et al. \cite{1},
LaGrange \cite{11,12}, Lu and Wu \cite{13}, Joshi and Khiste
\cite{8}, Nimbhorkar et al. \cite{15}, Hala\v {s} and Jukl
\cite{4}, Joshi \cite{7}, Joshi, Waphare and Pourali \cite{9, jwp} and
Hala\v {s} and L\"{a}nger \cite{5}.

A graph is called \textit{realizable as zero-divisor graph} if it
is isomorphic to the zero-divisor graph of an algebraic structure
or an ordered structure.  In \cite{12}, LaGrange characterized
simple graphs which are realizable as the zero-divisor graphs of
Boolean rings and in \cite{13}, Lu and Wu gave a class of graphs
that are realizable as the zero-divisor graphs of posets.
Recently, Joshi and Khiste \cite{8} extended the result of
LaGarange \cite{12} by characterizing simple graphs which are
realizable as zero-divisor graphs of Boolean posets.

In this paper, we provide a class of graphs, namely the
non-ancestor graphs of rooted trees, that are realizable as
zero-divisor graphs of lower dismantlable lattices. In fact we
prove:
\begin{thm}\label{t1}For a simple undirected graph $G$, the following statements are equivalent.
\begin{enumerate}
\item[$(a)$] $G\in \mathcal{G_T}$, the class of non-ancestor
graphs of rooted trees. \item[$(b)$] $G=G_{\{0\}}(L)$ for
some lower dismantlable lattice $L$ with the greatest element $1$
as a join-reducible element. \item[$(c)$] $G$ is the
incomparability graph of $(L\backslash \{0,1\},\leq) $ for some
lower dismantlable lattice $L$ with the greatest element $1$ as a
join-reducible element.
\end{enumerate}
\end{thm}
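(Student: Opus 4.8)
The plan is to isolate the one lemma that forces the zero-divisor graph and the incomparability graph to coincide, and then to match both to the combinatorics of rooted trees. The central claim is: \emph{for any lower dismantlable lattice $L$ whose greatest element $1$ is join-reducible, and any $x,y\in L\setminus\{0,1\}$, one has $x\wedge y=0$ if and only if $x$ and $y$ are incomparable.} The easy direction is immediate by contraposition: if $x\le y$ (or $y\le x$) then $x\wedge y\in\{x,y\}\subseteq L\setminus\{0\}$, so $x\wedge y\ne 0$. The substantive direction, \emph{incomparable $\Rightarrow$ meet $0$}, is where the dismantlable hypothesis enters, and I would prove it by induction along an adjunct representation (equivalently, a dismantling sequence) of $L$: each successively adjoined element sits over an interval whose bottom is $0$, so its meet with anything incomparable to it collapses to $0$, and the inductive step checks that a single adjunct preserves the property. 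Granting this lemma, the edge set of $G_{\{0\}}(L)$ (the pairs with $x\wedge y=0$) is literally the edge set of the incomparability graph of $(L\setminus\{0,1\},\le)$; since one and the same witness lattice then serves statements $(b)$ and $(c)$, the equivalence $(b)\Leftrightarrow(c)$ follows at once.

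For $(c)\Rightarrow(a)$ I would take a witness $L$ and analyse the poset $P=L\setminus\{0\}$. The lemma shows every principal filter $\uparrow_P x$ is a chain: if $u,v\ge x$ were incomparable with $u,v\ne 1$, then $u\wedge v=0$ would force $x\le 0$, impossible, while $1$ is comparable to everything. Hence each $x\ne 1$ has a unique upper cover, so the Hasse diagram of $P$ is a rooted tree $T$ with root $1$ (rooted at the top), and join-reducibility of $1$ corresponds exactly to the root of $T$ having at least two children. The incomparability graph of $P$ is then precisely the non-ancestor graph of $T$; deleting the vertex $1$, which is comparable to all others, recovers the incomparability graph of $L\setminus\{0,1\}$, and so $G\in\mathcal{G_T}$.

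For $(a)\Rightarrow(b)$, given that $G$ is the non-ancestor graph of a rooted tree $T$ with root $r$ (having at least two subtrees), I would build $L$ by taking the tree order with $r$ as greatest element and adjoining a new least element $0$ below every vertex. I then verify $L$ is a lattice: comparable pairs meet and join inside $T$, while incomparable pairs meet to $0$ and join to their nearest common ancestor, which exists and is unique in a tree. The element $1=r$ is join-reducible since it is the join of any two of its children. To certify that $L$ is lower dismantlable I would exhibit a dismantling/adjunct representation explicitly, e.g.\ by peeling leaves, each of which is a doubly irreducible element realized as an adjunct over a pair of the form $(0,\cdot)$. Finally, the lemma gives $G_{\{0\}}(L)=\mathrm{Inc}(L\setminus\{0,1\})=\mathrm{Inc}(V(T)\setminus\{r\})$, which is the non-ancestor graph $G$, establishing $(b)$.

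The hard part will be the substantive direction of the lemma together with the certification that the constructed lattice is genuinely lower dismantlable; both lean on the precise adjunct definition and the associated structure theorem for these lattices. A secondary, but necessary, point is to fix the degenerate conventions so the three classes align on the nose: the root of $T$ is comparable to every vertex (hence contributes no edge and is identified with $1$), and rooted trees whose root has a single child yield a join-\emph{irreducible} $1$ and must be excluded from $\mathcal{G_T}$. Checking that the forest-versus-tree dichotomy for $L\setminus\{0,1\}$ matches the join-reducibility of $1$ is exactly what pins this down, and I expect it to be routine once the lemma and the adjunct representation are in hand.
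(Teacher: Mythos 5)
Your architecture is essentially the paper's: your central claim is exactly Lemma~\ref{l1}$(i)$ (equivalently, the content of Theorem~\ref{t4}$(d)$), your construction for $(a)\Rightarrow(b)$ is the same tree-plus-new-bottom lattice, and your $(c)\Rightarrow(a)$ extracts the same rooted-tree structure of the Hasse diagram of $L\setminus\{0\}$ (your route, via ``up-sets are chains, hence unique upper covers,'' is in fact more direct than the paper's, which reaches the tree through a cycle analysis of the cover graph inside Theorem~\ref{t4}). However, one step fails as written: certifying lower dismantlability in $(a)\Rightarrow(b)$ by peeling single leaves. Peeling a leaf $\ell$ with parent $p$ is the adjunct $L']_0^{\,p}\{\ell\}$, and Definition~\ref{d1} requires an adjunct pair $(a,b)$ to satisfy $a\not\prec b$ in $L'$, i.e., $p$ must not be an atom of $L'=L\setminus\{\ell\}$. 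This can fail for every leaf simultaneously: take the tree with root $r$, children $c_1,c_2$, and a single child $\ell_i$ under each $c_i$; deleting any $\ell_i$ leaves $c_i$ an atom of $L'$, so $(0,c_i)$ is not a legitimate adjunct pair, and no single leaf can be peeled first. The repair is either to peel maximal pendant chains instead (in the example, adjoin the chain $\ell_i<c_i$ over the pair $(0,r)$), or to argue as the paper does: since incomparable elements meet to $0$, $L$ contains no crown, so Kelly--Rival (Theorem~\ref{t3}) gives dismantlability, the Structure Theorem gives an adjunct-of-chains representation, and in any adjunct $L_1]_a^bL_2$ the element $a$ is a meet of two incomparable elements (take $z\in L_1$ with $a<z<b$ and any $x\in L_2$; then $z\wedge x=a$), which forces every adjunct pair to be of the form $(0,b_i)$.

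A second, smaller gap sits in your $(b)\Leftrightarrow(c)$: you match edge sets, but graph equality also requires equal vertex sets, and $V\left(G_{\{0\}}(L)\right)$ consists only of zero-divisors, while the incomparability graph of $L\setminus\{0,1\}$ contains every element, including any element comparable to all others (such a vertex is isolated in the incomparability graph, not absent; for a chain the two graphs genuinely differ). This is precisely where join-reducibility of $1$ must enter: $1$ then has two lower covers $d_1,d_2$, which are incomparable, and any $x\in L\setminus\{0,1\}$ comparable to both would satisfy $x\leq d_1\wedge d_2=0$; hence every $x\in L\setminus\{0,1\}$ is incomparable to something and so is a zero-divisor. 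You flag this as a convention-matching issue in your closing paragraph but never discharge it; it needs this short argument, which the paper records as the observation stated just before Lemma~\ref{l1}.
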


Rival \cite{17} introduced dismantlable lattices to study the
combinatorial properties of doubly irreducible elements. By
dismantlable lattice, we mean a lattice which can be completely
``dismantled" by removing one element at each stage. Kelly and Rival
\cite{10} characterized dismantlable lattices by means of crowns,
whereas Thakare, Pawar and Waphare \cite{18} gave a structure
theorem for dismantlable lattices using adjunct operation.

Now we begin with the necessary definitions and terminology.
\begin{defn}
A nonzero element $p$ of a lattice $L$ with 0 is an {\it atom} if $0\prec p$ (by
$a\prec b$, we mean there is no $c$ such that $a<c<b$). Dually, a nonzero element $d$ of a lattice $L$ with 1 is a \textit{dual atom} if $d\prec 1$.
\end{defn}

\begin{defn}[Definition 2.1, Thakare et al. \cite{18}]\label{d1}
 If $L_1$ and $L_2$ are two disjoint finite lattices and $\; (a, b)\; $ is a pair
of elements in $L_1$ such that $a < b$ and $a \not\prec b$. Define the
partial order $\leq$ on $L = L_1 \cup L_2$ with respect to
the pair $(a,b)$ as follows.

$x \leq y$ in $L$ if

either $x,y \in L_1$ and $x \leq y$ in $L_1$;

or $x,y \in L_2$ and $x \leq y$ in $L_2$;

or $x \in L_1,$ $ y \in L_2$ and $x \leq a$ in $L_1$;

or $x \in L_2,$ $ y \in L_1$ and $b \leq y$ in $L_1$.

It is easy to see that $L$ is a lattice containing $L_1$ and $L_2$
as sublattices. The procedure of obtaining $L$ in this way is
called an {\it adjunct operation of $L_2$ to $L_1$}. The pair $(a,b)$
is called {\it an adjunct pair } and $L$ is an {\it adjunct }
of $L_2$ to $L_1$ with respect to the adjunct pair ($a,b$) and we
write $L = L_1 ]^b_a L_2$.
\end{defn}

We place the Hasse diagrams of $L_1$, $L_2$ side
by side in such a way that the greatest element $1_{L_2}$ of $L_2$
is at the lower position than $b$ and the least element $0_{L_2}$
of $L_2$ is at the higher position than $a$. Then add the
coverings $1_{L_2}\prec b$ and $ a\prec 0_{L_2}$, as shown in
Figure \ref{f1}, to obtain the Hasse diagram of $L=L_1]_a^bL_2$.
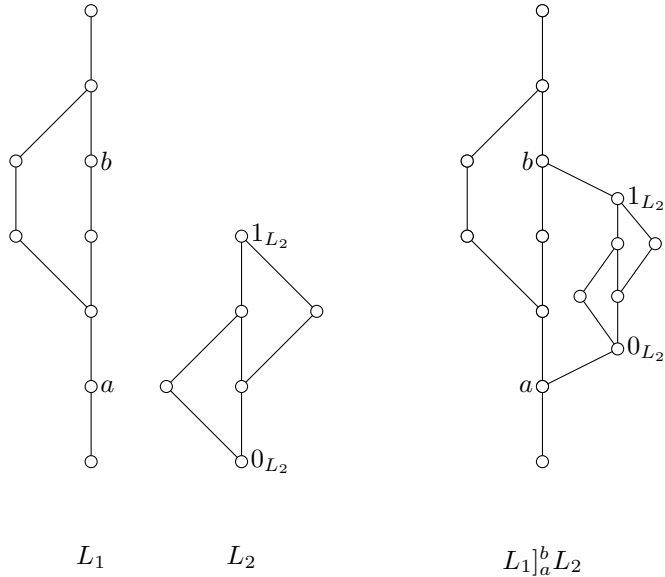
\begin{figure}[h]
\begin{tikzpicture}
\draw (0,0)--(0,6); \draw (0,2)--(-1,3)--(-1,4)--(0,5);
\draw[fill=white](0,0) circle(.08); \draw[fill=white](0,1)
circle(.08); \draw[fill=white](0,2) circle(.08);
\draw[fill=white](0,3) circle(.08); \draw[fill=white](0,4)
circle(.08); \draw[fill=white](0,5) circle(.08);
\draw[fill=white](0,6) circle(.08); \draw[fill=white](-1,3)
circle(.08); \draw[fill=white](-1,4) circle(.08); \node [right] at
(0,1){$a$}; \node [right] at (0,4){$b$}; \node [below] at
(0,-1){$L_1$};

\draw (2,0)--(2,3); \draw (2,0)--(1,1)--(2,2); \draw
(2,1)--(3,2)--(2,3); \draw[fill=white](2,0) circle(.08);
\draw[fill=white](2,1) circle(.08); \draw[fill=white](2,2)
circle(.08); \draw[fill=white](2,3) circle(.08);
\draw[fill=white](1,1) circle(.08); \draw[fill=white](3,2)
circle(.08); \node [below] at (2,-1){$L_2$}; \node [right] at
(2,0){$0_{\tiny L_2}$};\node [right] at (2,3){$1_{\tiny L_2}$};

\draw (6,0)--(6,6); \draw (6,2)--(5,3)--(5,4)--(6,5);
\draw[fill=white](6,0) circle(.08); \draw[fill=white](6,1)
circle(.08); \draw[fill=white](6,2) circle(.08);
\draw[fill=white](6,3) circle(.08); \draw[fill=white](6,4)
circle(.08); \draw[fill=white](6,5) circle(.08);
\draw[fill=white](6,6) circle(.08); \draw[fill=white](5,3)
circle(.08); \draw[fill=white](5,4) circle(.08);

\draw (6,1)--(7,1.5); \draw (6,4)--(7,3.5); \draw
(7,1.5)--(7,3.5); \draw (7,1.5)--(6.5,2.2)--(7,2.9); \draw
(7,2.2)--(7.5,2.9)--(7,3.5); \draw[fill=white](7,1.5) circle(.08);
\draw[fill=white](7,2.2) circle(.08); \draw[fill=white](7,2.9)
circle(.08); \draw[fill=white](7,3.5) circle(.08);
\draw[fill=white](6.5,2.2) circle(.08); \draw[fill=white](7.5,2.9)
circle(.08); \node [left] at (6,1){$a$}; \node [left] at
(6,4){$b$}; \draw[fill=white](6,1) circle(.08);
\draw[fill=white](6,2) circle(.08); \draw[fill=white](6,3)
circle(.08); \draw[fill=white](6,4) circle(.08);
\draw[fill=white](6,5) circle(.08); \draw[fill=white](6,6)
circle(.08); \draw[fill=white](5,3) circle(.08);
\draw[fill=white](5,4) circle(.08); \node [below] at (6,-1)
{$L_1]_a^bL_2$}; \node [right] at (7,1.5){$0_{\tiny L_2}$};\node
[right] at (7,3.5){$1_{\tiny L_2}$};
\end{tikzpicture}
\caption{Adjunct of two lattices $L_1$ and $L_2$}\label{f1}
\end{figure}

Clearly,  $|E(L)| = |E(L_1)| + |E(L_2)|+2$, where $E(L)$ is
nothing but edge set of $L$. This also implies that the adjunct
operation preserves all the covering relations of the individual
lattices $L_1$ and $L_2$. Also note that if $x,y\in L_2$, then $a\prec 0_{L_2}\leq x\wedge y$. Hence $x\wedge y\neq 0$ in $L=L_1]_a^bL_2$.

\pagebreak

\section{properties of zero-divisor graphs of dismantlable lattices}

Following Beck \cite{2}, Nimbhorkar et al. \cite{15}
introduced the concept of zero-divisor graph of meet-semilattices
with 0, which was further extended by Hala\v {s} and Jukl \cite{4}
to posets with 0. Recently, Joshi \cite{7} introduced the
zero-divisor graph with respect to an ideal $I$ of a poset with 0.
Note that his definition of zero-divisor graph coincides with the
definition of Lu and Wu \cite{13} when $I=\{0\}$.

A nonempty subset $I$ of a lattice $L$ is an {\it ideal} of $L$ if
$a,b\in I$ and $c\in L$ with $c\leq a$ implies $c\in I$ and $a\vee
b\in I$. An ideal $I\neq L$ is a {\it prime ideal} if $a\wedge
b\in I$ implies either $a\in I$ or $b\in  I$. A prime ideal $P$ of
a lattice $L$ is a {\it minimal prime ideal} if for any prime ideal
$Q$ we have $P\subseteq Q\subseteq L$ implies either $P=Q$ or
$Q=L$.

Now, we recall the definition of zero-divisor graph given by Joshi \cite{7} when the corresponding poset is a lattice and an
ideal $I=\{0\}$.
\begin{defn}[Definition 2.1, Joshi \cite{7}]\label{d2} Let $L$ be a lattice with the least element 0. We associate a simple
undirected graph $G_{\{0\}}(L)$ as
follows. The set of  vertices of $G_{\{0\}}(L)$ is\\
$V\left(G_{\{0\}}(L)\right)=\Bigl\{x \in L \setminus \{0\}~:~x\wedge y=0$ for some $y \in L\setminus \{0\} \Bigr\}$ and
distinct vertices $x, y$ are adjacent if and only if $x\wedge
y=0$. The graph $G_{\{0\}}(L)$ is called the {\it zero-divisor
graph of $L$}.
\end{defn}
The following Figure \ref{f2} illustrates the zero-divisor graph
$G_{\{0\}}(L)$ of the given lattice $L$.
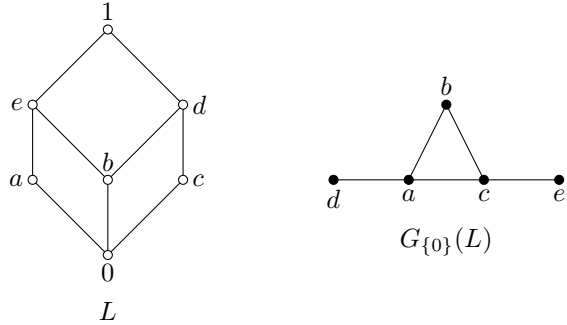
\begin{figure}[h]
\begin{tikzpicture}
\draw (1,0)--(1,1)--(0,2)--(0,1)--(1,0)--(2,1)--(2,2)--(1,1);\draw
(0,2)--(1,3)--(2,2);\draw [fill=white] (1,0) circle (.06); \draw
[fill=white] (1,1) circle (.06);\draw [fill=white] (0,2) circle
(.06);\draw [fill=white] (0,1) circle (.06);\draw [fill=white]
(2,1) circle (.06);\draw [fill=white] (2,2) circle (.06);\draw
[fill=white] (1,3) circle (.06);\node [below] at (1,0) {$0$};\node
[left] at (0,1) {$a$};\node [above] at (1,1) {$b$};\node [right]
at (2,1) {$c$};\node [right] at (2,2) {$d$};\node [left] at (0,2)
{$e$};\node [above] at (1,3) {$1$};\node [below] at (1,-.5) {$L$};
\draw(5,1)--(6,1); \draw(4,1)--(5,1)--(5.5,2)--(6,1)--(7,1);\draw
[fill=black] (5,1) circle (.06);\draw [fill=black] (4,1) circle
(.06);\draw [fill=black] (6,1) circle (.06);\draw [fill=black]
(7,1) circle (.06);\draw [fill=black] (5.5,2) circle (.06);\node
[below] at (4,1) {$d$};\node [below] at (5,1) {$a$};\node [below]
at (6,1) {$c$};\node [below] at (7,1) {$e$};\node [above] at
(5.5,2) {$b$};\node [below] at (5.5,.5) {$G_{\{0\}}(L)$};
\end{tikzpicture}
\caption{A lattice $L$ with its zero-divisor graph
$G_{\{0\}}(L)$}\label{f2}
\end{figure}

 Note that $\{0\}$ is a prime ideal in a lattice $L$ with 0 if and only if $V\left(G_{\{0\}}(L)\right)=\emptyset$.

 Now, we reveal the structure of zero-divisor graph of
 $L=L_1]_a^bL_2$ in terms of zero-divisor graphs of $L_1$ and
 $L_2$. For that purpose, we need the following definitions.
\begin{defn}\label{d3}Given two graphs $G_1$ and $G_2$, the {\it union} $G_1\cup G_2$
is the graph  with  $V(G_1\cup G_2)=V(G_1)\cup V(G_2)$ and
$E(G_1\cup G_2)=E(G_1)\cup E(G_2)$. The {\it join} of $G_1$ and
$G_2$, denoted by $G_1+G_2$ is the graph with  $V(G_1+
G_2)=V(G_1)\cup V(G_2)$, $E(G_1+ G_2)=E(G_1)\cup E(G_2)\cup J$,
where $J=\Bigl\{\{x_1,x_2\}\;|\; x_1\in V(G_1), x_2 \in
V(G_2)\Bigr\}$. The {\it null graph} on a set $S$ is the graph
whose vertex set is $S$ and edge set is the empty set, we denote
it by $N(S)$.\end{defn}

\textbf{Throughout this paper all the lattices are finite.}

The following result describes the zero-divisor graph of adjunct
of two lattices.

 \begin{thm}\label{t2} Let $L_1$ and $L_2$ be two lattices.  Put $L=L_1]_a ^b L_2$. Then the following statements are true.
 \begin{enumerate}
 \item[$(a)$] If $a\neq 0$ and $a\notin V\left(G_{\{0\}}(L_1)\right)$, then $G_{\{0\}}(L)= G_{\{0\}}(L_1)$.
   \item[$(b)$]  If $a\in V\big(G_{\{0\}}(L_1)\big)$, then $G_{\{0\}}(L)= G_{\{0\}}(L_1)\cup \big(G_a + N(L_2)\big)$,
   where \\ $G_a=\{x\in L_1 ~|~ a,x ~ \textnormal{are adjacent in } G_{\{0\}}(L_1)\}$.
 \item[$(c)$] If $a=0$, then $G_{\{0\}}(L)= G_{\{0\}}(L_1)\cup \Big( N\big(L_1^* \backslash [b)\big) + N(L_2)\Big)$, where
 $[b)=\{x\in L_1~:~ x\geq b\}$ is the principal dual ideal generated by $b$ in $L_1$ and $L_1^*= L_1\backslash \{0\}$.
 \end{enumerate}
 \end{thm}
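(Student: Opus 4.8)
The plan is to reduce everything to an explicit description of the meet operation in $L=L_1]_a^bL_2$ and then simply read off adjacencies in $G_{\{0\}}(L)$. First I would record the elementary fact that $0_{L_1}$ is the least element $0$ of $L$: every element of $L_2$ lies above $0_{L_2}>a\ge 0_{L_1}$, so the vertex sets in Definition~\ref{d2} are computed against this common $0$.

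The heart of the argument is a meet lemma obtained from the four clauses of Definition~\ref{d1}. I would show: (i) for $x,y\in L_1$ one has $x\wedge_L y=x\wedge_{L_1}y$, since any element of $L_2$ that is a common lower bound forces $b\le x\wedge_{L_1}y$, but then $x\wedge_{L_1}y$ already dominates all of $L_2$ because $1_{L_2}\prec b$; (ii) for $x,y\in L_2$ one has $x\wedge_L y=x\wedge_{L_2}y\ge 0_{L_2}>0$, which is exactly the observation noted after Definition~\ref{d1}; (iii) in the mixed case $x\in L_1,\ y\in L_2$, one has $x\wedge_L y=y$ when $b\le x$ and $x\wedge_L y=x\wedge_{L_1}a$ when $b\not\le x$. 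The case split in (iii) comes from asking whether any element of $L_2$ is a lower bound of $x$, which happens exactly when $b\le x$.

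From this lemma I would extract three adjacency rules that drive the whole theorem: edges inside $L_1$ coincide with those of $G_{\{0\}}(L_1)$; $L_2$ is an independent set; and a vertex $x\in L_1$ is adjacent to some (equivalently every) element of $L_2$ iff $b\not\le x$ and $x\wedge_{L_1}a=0$. The crucial ``all-or-nothing'' feature — that the mixed meet $x\wedge_L y$ does not depend on the particular $y\in L_2$ — is what produces the complete bipartite join structure. I would also record the simplification that when $a\neq 0$, the condition $x\wedge_{L_1}a=0$ (for $x\neq0$) already forces $b\not\le x$, since $b\le x$ would give $a\le x$ and hence $x\wedge_{L_1}a=a\neq0$.

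Finally I would dispatch the three cases. For $(a)$, the hypothesis $a\notin V(G_{\{0\}}(L_1))$ with $a\neq0$ says no nonzero $x$ satisfies $x\wedge_{L_1}a=0$, so no $L_1$–$L_2$ edges arise, no element of $L_1$ becomes a new zero-divisor, and every element of $L_2$ stays isolated; hence $G_{\{0\}}(L)=G_{\{0\}}(L_1)$. For $(b)$, the neighbours of $a$ in $G_{\{0\}}(L_1)$ are precisely the $x\in L_1$ adjacent to all of $L_2$, namely $G_a$; since $a$ is a zero-divisor, $G_a\neq\emptyset$, so every element of $L_2$ becomes a vertex, giving the complete bipartite join $G_a+N(L_2)$ superimposed on $G_{\{0\}}(L_1)$. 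For $(c)$, $a=0$ makes $x\wedge_{L_1}a=0$ automatic, so the mixed adjacency reduces to $b\not\le x$; thus exactly the vertices of $L_1^*\setminus[b)$ are joined to all of $L_2$, yielding $N(L_1^*\setminus[b))+N(L_2)$ over $G_{\{0\}}(L_1)$. The main obstacle I anticipate is pinning down the mixed-meet formula (iii) precisely, including its boundary behaviour at $b\le x$ and the role of $a$; once that computation is secured, the three cases are routine bookkeeping.
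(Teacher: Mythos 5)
Your proposal is correct, but it is organized around a different pivot than the paper's own proof. The paper never computes the meet operation of $L=L_1]_a^bL_2$ explicitly; instead, for each of the three parts it argues directly about when $x\wedge y=0$, using localized contradiction arguments — e.g., in part $(b)$ it assumes $x\wedge y\neq 0$ and splits into the cases $x\wedge y\in L_1$ and $x\wedge y\in L_2$ to establish that $x\wedge a=0$ if and only if $x\wedge y=0$ for every $y\in L_2$, and in part $(c)$ it shows mixed adjacency is equivalent to $x\ngeq b$. Your meet lemma — in particular clause (iii), that $x\wedge_L y=y$ when $b\leq x$ and $x\wedge_L y=x\wedge_{L_1}a$ when $b\nleq x$, independently of which $y\in L_2$ is taken — is strictly more informative than anything the paper proves: it identifies the meet rather than merely characterizing when it vanishes, it subsumes the paper's key claims in all three parts at once, and it makes the all-or-nothing complete-bipartite structure transparent, so the three cases really do reduce to bookkeeping as you claim. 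What your route buys is uniformity and reusability (the same lemma would also streamline Lemma \ref{l1} and Theorem \ref{t9} later in the paper); what the paper's route buys is that it never has to verify a full greatest-lower-bound computation in the adjunct, only the vanishing condition it needs. One loose end, which the paper shares rather than resolves: for the vertex-set equality in part $(c)$ you need $L_1^*\backslash [b)\neq\emptyset$, so that the elements of $L_2$ actually acquire neighbours and hence count as vertices; this follows from the adjunct-pair requirement $a\nprec b$, which supplies some $c$ with $0=a<c<b$ and $c\ngeq b$. The paper glosses the same point by invoking an atom $p$ of $L_1$ with $p\wedge x=0$, without noting that $p\geq b$ is impossible for exactly the same reason.
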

\begin{proof}$(a)$ Let $a\neq 0$ and $a\notin V\left(G_{\{0\}}(L_1)\right)$. If $x\in V\left(G_{\{0\}}(L)\right)$ is adjacent to some $y\in L_2$,
then $x\wedge y=0$, clearly $x\notin L_2$. As $a\leq y$ in $L$, we get $a\wedge x
=0$. Hence $a\in  V\left(G_{\{0\}}(L_1)\right)$, a contradiction to
the fact that $a\notin V\left(G_{\{0\}}(L_1)\right)$. Hence no element
of $L_2$ is adjacent to any vertex of $G_{\{0\}}(L)$. Also, $G_{\{0\}}(L_1)$ is a subgraph of $G_{\{0\}}(L)$, therefore
$G_{\{0\}}(L)= G_{\{0\}}(L_1)$.

 $(b)$ Now, let $a\in V\left(G_{\{0\}}(L_1)\right)$. If $x\in  V\left(G_{\{0\}}(L)\right)$, then there exists a nonzero element
 $y\in L$ such that $x\wedge y=0$.
 This implies that at most one of $x$ and $y$ may be in $L_2$, otherwise $a\leq x\wedge y=0$, a contradiction. If $x,y\in L_1$, then
 $x\in V\left(G_{\{0\}}(L_1)\right)$. Without loss of generality, let $x\in L_1$ and $y\in L_2$, which gives $x\wedge a=0$, since $a\leq y$.
 Therefore $x\in G_a$.\\
 Thus $V\left(G_{\{0\}}(L)\right)\subseteq V\Big(G_{\{0\}}(L_1)\cup \big(G_a + N(L_2)\big)\Big)$.

 Since  $ V\Big(G_{\{0\}}(L_1)\cup \big(G_a + N(L_2)\big)\Big)\subseteq V\left(G_{\{0\}}(L)\right)$, the equality holds.

Let $x$ and $y$ be adjacent in $G_{\{0\}}(L)$. Hence at most one of $x$ and $y$ may be in $L_2$. If $x,y\in L_1$,
then $x$ and $y$ are adjacent in $G_{\{0\}}(L_1)$.  Now, without loss
of generality, assume that $x\in L_1$ and $y\in L_2$. Therefore $x\wedge
a=0$. Hence $x\in G_a$, \textit{i.e, } $x$ and $y$ are adjacent in $G_a + N(L_2)$.

Now, let $x$ and $y$ be adjacent in  $G_{\{0\}}(L_1)\cup
 \big(G_a + N(L_2)\big)$. If $x, y\in G_{\{0\}}(L_1)$, we are through. Let $x\in G_a$ and $y\in L_2$. Then $x\wedge a=0$. We claim that $x\wedge y=0$.
 Suppose $x\wedge y\neq 0$. Then we have two possibilities either $x\wedge y\in L_1$ or $x\wedge y\in L_2$. If $x\wedge y\in L_2$,
 then by the definition of adjunct, we have $a\leq x\wedge y$ which yields $a\leq x\wedge y\wedge a=0$, a contradiction.
 Thus $x\wedge y\notin L_2$. Hence $x\wedge y \in L_1$. Since $x\wedge y\leq y$ for $y\in L_2$, again by the definition of adjunct,
 we have $x\wedge y\leq a$. This gives $x\wedge y\wedge x\leq a\wedge x=0$, a contradiction to the fact that $x\wedge y \neq 0$.
 Thus we conclude that $x\wedge a=0$ if and only if $x\wedge y=0$ for any $y\in L_2$. Therefore $G_{\{0\}}(L)= G_{\{0\}}(L_1)\cup \big(G_a + N(L_2)\big)$.

$(c)$ Assume that $a=0$. Let $x\in
V\left(G_{\{0\}}(L)\right)$. Then there exists a nonzero element
$y\in L$ such that $x\wedge y=0$. Hence at most one of $x$ and $y$ may be in
$L_2$. If $x,y\in L_1$, then $x\in G_{\{0\}}(L_1)$. Without loss
of generality, let $x\in L_1$ and $y\in L_2$. If $x\geq b$, then by
the definition of adjunct, we have $x\geq y$,
a contradiction to the fact that $x\wedge y=0$. Hence $x\ngeq b$, \textit{i.e.}, $x\in N(L_1^*
\backslash [b))$. Therefore $x\in V\Big( G_{\{0\}}(L_1) \cup
\big(N(L_1^* \backslash [b)) + N(L_2)\big)\Big)$.

Now, assume that $x\in
V\Big( G_{\{0\}}(L_1) \cup \big(N(L_1^* \backslash [b)) +
N(L_2)\big)\Big)$.

If $x\in G_{\{0\}}(L_1)$, then we are through.
Let $x\in V\Big( N\big(L_1^* \backslash [b)\big) + N(L_2)\Big)$.
If $x\in N\big(L_1^* \backslash [b)\big)$, then $x\ngeq b$. For any
$y\in L_2$, we have $x||y$ and by the definition of adjunct and
$a=0$, we have $x\wedge y=0$. Therefore $x\in
V\left(G_{\{0\}}(L)\right)$, as $y$ is a nonzero element of $L$.

If
$x\in N(L_2)$, then for any atom $p$ of $L_1$, $p\wedge x=0$.
Therefore $x\in G_{\{0\}}(L)$. Hence we have
$V\left(G_{\{0\}}(L)\right)=V\Big( G_{\{0\}}(L_1) \cup \big(N(L_1^* \backslash [b)) +
N(L_2)\big)\Big)$.

 Let $x$ and $y$ be adjacent in  $G_{\{0\}}(L)$. Then at most one of $x$ and $y$ may be in $L_2$.
 If $x,y\in L_1$, then they are adjacent in $G_{\{0\}}(L_1)$. Therefore they are adjacent in
 $G_{\{0\}}(L_1)\cup \Big( N\big(L_1^* \backslash [b)\big) + N(L_2)\Big)$. Without loss of generality, let $x\in L_1$
 and $y\in L_2$. As above, $x\ngeq b$. Hence $x\in ( N(L_1^* \backslash [b))$ and $y\in L_2$.
 Therefore they are adjacent in $N(L_1^* \backslash [b)) + N(L_2)$ and hence in $G_{\{0\}}(L_1)\cup \Big( N\big(L_1^* \backslash [b)\big) + N(L_2)\Big)$.

Conversely, suppose that $x $ and $y$ are adjacent in
$G_{\{0\}}(L_1)\cup \Big( N\big(L_1^* \backslash [b)\big) +
N(L_2)\Big)$. If both $x, y \in G_{\{0\}}(L_1)$, we are done.
Also, at most one of $x$ and $y$ may be in $L_2$. Without loss of
generality, let $x\in L_1$ and $y\in L_2$, then $x\ngeq b$,
\textit{i.e.}, $x\in  N(L_1^* \backslash [b))$. Therefore $x\wedge
y=0$ in $L$. Hence $x$ and $y$ are adjacent in $G_{\{0\}}(L)$.
Therefore we get $G_{\{0\}}(L)= G_{\{0\}}(L_1)\cup \Big(
N\big(L_1^* \backslash [b)\big) + N(L_2)\Big)$.
\end{proof}

The following Figures \ref{f3}, \ref{f4} and \ref{f5} illustrate
Theorem \ref{t2}.

\begin{figure}[h]
\begin{tikzpicture}
\draw (0,0)--(-.5,1)--(0,2)--(0,5); \draw (0,0)--(.5,1)--(0,2);
\draw (0,2)--(2,2.5)--(2,3.5)--(0,4); \draw[fill=white](2,2.5)
circle(.08); \draw[fill=white](2,3.5) circle(.08);
\draw[fill=white](0,2) circle(.08); \draw[fill=white](0,2)
circle(.08); \draw[fill=white](0,3) circle(.08);
\draw[fill=white](0,4) circle(.08); \draw[fill=white](0,5)
circle(.08); \draw[fill=white](-.5,1) circle(.08);
\draw[fill=white](.5,1) circle(.08); \draw[fill=white](0,0)
circle(.08); \node [left] at (0,2){$a$}; \node [below] at
(2,2.5){$y_1$}; \node [above] at (2,3.5){$y_2$}; \node [above] at
(-.6,1){$x_1$}; \node [above] at (.6,1){$x_2$}; \node [left] at
(0,3){$x_3$}; \node [left] at (0,4){$b$}; \node [left] at
(0,5){$1$}; \node [left] at (0,0){$0$}; \draw (0,2.6) ellipse (1.1
and 3);

\draw (2,3) ellipse (.4 and 1.1);

\node [left] at (2.5,1.6){$L_2$}; \node [below] at (0,-.3){$L_1$};
\draw (6,3)--(8,3); \draw[fill=black](6,3) circle(.08);
\draw[fill=black](8,3) circle(.08); \node [left] at (6,3) {$x_1$};
\node [right] at (8,3) {$x_2$};\node [below] at (7,2)
{$G_{\{0\}}(L_1]_a^bL_2)\cong G_{\{0\}}(L_1) $};
\end{tikzpicture}
  \caption{Theorem \ref{t2}, Case $(a)$}\label{f3}
 \end{figure}
\begin{figure}[h]
\begin{tikzpicture}
\draw (0,0)--(0,4);\draw (0,0)--(-1,1)--(-1,3)--(0,4); \draw
((0,1)--(1,1.5)--(1,2.5)--(0,3); \draw [fill=white](0,0)
circle(.06);\draw [fill=white](0,1) circle(.06);\draw
[fill=white](0,2) circle(.06);\draw [fill=white](0,3) circle(.06);
\draw [fill=white](0,4) circle(.06);\draw [fill=white](-1,1)
circle(.06);\draw [fill=white](-1,3) circle(.06);\draw
[fill=white](1,1.5) circle(.06); \draw [fill=white](1,2.5)
circle(.06); \node [left] at (0,0){$0$};\node [below] at (1,1.5)
{$y_1$};\node [above] at (1,2.5) {$y_2$}; \node [left] at
(0,-.9){$L_1$};\node [right] at (.8,.6) {$L_2$}; \node [below] at
(-1,1){$x_1$}; \node [left] at (0,2){$x_2$}; \node [above] at
(-1,3.1){$x_3$}; \node [left] at (0,4){$1$}; \node [left] at
(0,3){$b$}; \node [left] at (0,1){$a$}; \draw (-.5,2) ellipse (1
and 2.6);\draw (1,2) ellipse (.35 and 1.1); \draw
(3.5,2.5)--(2.5,1)--(3.5,1.5)--(2.5,2)--(3.5,2.5)--(2.5,3)--(3.5,1.5);
\draw [fill=black](2.5,1) circle(.06);\draw [fill=black](2.5,2)
circle(.06);\draw [fill=black](2.5,3) circle(.06); \draw
[fill=black](3.5,1.5) circle(.06);\draw [fill=black](3.5,2.5)
circle(.06); \node [above] at (3.5,2.5){$x_1$}; \node [below] at
(2.5,1){$x_2$}; \node [below] at (3.5,1.5){$x_3$}; \node [above]
at (2.5,2){$a$}; \node [below] at (3,.5) {\tiny $G_{\{0\}}(L_1)$};
\node [above] at (2.5,3) {$b$};

\draw (5,1.5)--(6,2.5)--(5,2.5)--(6,1.5)--cycle; \node [below] at
(5,1.5) {$x_3$}; \node [above] at (5,2.5) {$x_1$}; \node [below]
at (6,1.5) {$y_2$};\node [above] at(6,2.5)
{$y_1$};\draw[fill=black] (5,1.5) circle (.06); \draw[fill=black]
(6,1.5) circle (.06); \draw[fill=black] (5,2.5) circle (.06);
\draw[fill=black] (6,2.5) circle (.06); \draw (5,2) ellipse (.35
and 1.1);\draw (6,2) ellipse (.35 and 1.1);\node [below] at
(5.5,.5) {\tiny $G_a+N(L_2)$};

\draw (10,2.5)--(9,1)--(10,1.5)--(9,2)--(10,2.5)--(9,3)--(10,1.5);
\draw [fill=black](9,1) circle(.06);\draw [fill=black](9,2)
circle(.06);\draw [fill=black](9,3) circle(.06); \draw
[fill=black](10,1.5) circle(.06);\draw [fill=black](10,2.5)
circle(.06); \node [above] at (10,2.5){$x_1$}; \node [left] at
(9,1){$x_2$}; \node [below] at (10,1.5){$x_3$}; \node [left] at
(9,2){$a$}; \node [left] at (9,3) {$b$}; \draw
(10,1.5)--(11,2.5)--(10,2.5)--(11,1.5)--cycle;\draw [fill=black]
(11,1.5) circle(.06);\draw [fill=black] (11,2.5) circle(.06); \node
[below] at (11,1.5) {$y_2$};\node [above] at (11,2.5) {$y_1$}; \node
[below] at (10.5,.5) {\tiny $G_{\{0\}}(L_1]_a^bL_2)= G_{\{0\}}(L_1)\cup
\big(G_a + N(L_2)\big)$};
\end{tikzpicture}
 \caption{Theorem \ref{t2}, Case $(b)$}\label{f4}
\end{figure}
\begin{figure}[h]
\begin{tikzpicture}
\draw (0,0)--(0,4)--(-1,3)--(-1,1)--cycle;\draw
(0,2)--(1,1.5)--(1,.5)--(0,0);\draw [fill=white] (0,0)
circle(.06); \draw [fill=white] (0,1) circle(.06);\draw
[fill=white] (0,3) circle(.06);\draw [fill=white] (0,4)
circle(.06); \draw [fill=white] (-1,3) circle(.06);\draw
[fill=white] (-1,1) circle(.06);\draw [fill=white] (1,.5)
circle(.06);\draw [fill=white] (1,1.5) circle(.06);\draw
[fill=white] (0,2) circle(.06); \node [below] at (0,0) {$0$};\node
[left] at (0,4) {$1$};\node [left] at (0,1) {$x_1$};\node [left]
at (0,2) {$b$};\node [left] at (0,3) {$x_4$};\node [above] at
(1,1.5) {$y_1$};\node [below] at (1,.5) {$y_2$};\node [below] at
(-1,1) {$x_2$};\node [above] at (-1,3) {$x_3$}; \draw (-.4,1.8)
ellipse (1 and 2.6); \draw (1,1) ellipse (.3 and 1.1); \node
[below] at (-.4,-.7) {$L_1$};\node [below] at (1,-.5) {$L_2$};
\draw (2.8,0)--(2.8,2)--(1.8,1.5)--cycle;\draw (2.8,0)--
(1.8,.5)--(2.8,2); \draw [fill=black](2.8,0) circle(.06);\draw
[fill=black](2.8,1) circle(.06);\draw [fill=black](2.8,2)
circle(.06);\draw [fill=black](1.8,0.5) circle(.06);\draw
[fill=black](1.8,1.5) circle(.06);\node [below] at
(1.8,.5){$x_4$};\node [above] at (1.8,1.5){$b$};\node [right] at
(2.8,0){$x_2$};\node [right] at (2.8,1){$x_1$};\node [right] at
(2.8,2){$x_3$};\node [below] at (2.3,-.5){\tiny $G_{\{0\}}(L_1)$};
\draw [fill=black](4.5,2) circle(.06);\draw [fill=black](4.5,1)
circle(.06);\draw [fill=black](4.5,0) circle(.06);\node [below] at
(4.5,0){$x_2$}; \node [above] at (4.5,1){$x_1$};\node [above] at
(4.5,2){$x_3$};\draw [fill=black] (5.5,.5) circle(.06);\draw
[fill=black] (5.5,1.5) circle(.06); \node [above] at
(5.5,1.5){$y_1$};\node [below] at (5.5,.5){$y_2$}; \draw
(5.5,.5)--(4.5,0)--(5.5,1.5)--(4.5,2)--(5.5,.5)--(4.5,1)--(5.5,1.5);\draw
(4.5,1) ellipse (.4 and 1.5);\draw (5.5,1) ellipse (.4 and 1.5);
\node [above] at (4.75,-1.2){\tiny
$N(L_1^*\backslash[b))+N(L_2)$};

\draw (8.5,0)--(8.5,2)--(7.5,1.5)--cycle;\draw (8.5,0)--
(7.5,.5)--(8.5,2); \draw [fill=black](8.5,0) circle(.06);\draw
[fill=black](8.5,1) circle(.06);\draw [fill=black](8.5,2)
circle(.06);\draw [fill=black](7.5,0.5) circle(.06);\draw
[fill=black](7.5,1.5) circle(.06);\node [below] at
(7.5,.5){$x_4$};\node [above] at (7.5,1.5){$b$};\node [below] at
(8.5,0){$x_2$};\node [left] at (8.5,1){$x_1$};\node [above] at
(8.5,2){$x_3$}; \draw
(9.5,.5)--(8.5,0)--(9.5,1.5)--(8.5,2)--(9.5,.5)--(8.5,1)--(9.5,1.5);
\draw [fill=black](9.5,1.5) circle(.06);\draw [fill=black](9.5,.5)
circle(.06); \node [below] at (9.5,.5){$y_2$};\node [above] at
(9.5,1.5){$y_1$};\node [below] at (8.8,-.5){\tiny $G_{\{0\}}\cup
(N(L_1^*\backslash[b))+N(L_2))$};
\end{tikzpicture}

 \caption{Theorem \ref{t2}, Case $(c)$}\label{f5}
\end{figure}

Let  $G$ be a graph and $x, y$ be distinct vertices in $G$. A path
is a simple graph whose vertices can be ordered so that two
vertices are adjacent if and only if they are consecutive in the
list. If $G$ has a $x$, $y$-path, then the {\it distance} from $x$
to $y$, written $d(x,y)$, is the least length of a $x$, $y$-path.
If $G$ has no such path, then $d(x,y)=\infty$. The {\it diameter
}($diam(G)$) is $max_{x,y\in V(G)}d(x,y)$; see West \cite{6}.

\begin{cor}\label{c1}Let $L$ be an adjunct of two chains $C_1$ and $C_2$ with an adjunct pair $(a,1)$,
\textit{i.e.},  $L=C_1]_a ^1 C_2$. If $G_{\{0\}}(L)\neq\emptyset$,
then $a=0$ and  $G_{\{0\}}(L)$ is a complete bipartite graph.
Hence $diam (G_{\{0\}}(L))\leq 2$. Moreover, $G_{\{0\}}(L)=
K_{m,n}$, if $|C_1|=n+2$ and $|C_2|=m$ where $m,n\in
\mathbb{N}$.\end{cor}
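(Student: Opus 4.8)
The plan is to reduce everything to Theorem~\ref{t2} after first recording one elementary fact: the zero-divisor graph of any chain is empty. Indeed, if $x,y$ are distinct nonzero elements of a chain $C$, then $x\wedge y=\min\{x,y\}\neq 0$, so no two nonzero elements meet to $0$; hence $V\big(G_{\{0\}}(C)\big)=\emptyset$. Applying this to $C_1$ gives $V\big(G_{\{0\}}(C_1)\big)=\emptyset$, so in Theorem~\ref{t2} case~$(b)$ cannot arise and only cases~$(a)$ and~$(c)$ are available.

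First I would establish that $a=0$. If $a\neq 0$, then since $a\notin V\big(G_{\{0\}}(C_1)\big)=\emptyset$ we fall into case~$(a)$ of Theorem~\ref{t2}, which yields $G_{\{0\}}(L)=G_{\{0\}}(C_1)=\emptyset$. Thus the hypothesis $G_{\{0\}}(L)\neq\emptyset$ forces $a=0$, placing us in case~$(c)$.

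Next I would compute the graph when $a=0$. Here $b=1$, so the principal dual ideal is $[1)=\{x\in C_1: x\geq 1\}=\{1\}$, whence $C_1^*\backslash[1)=(C_1\backslash\{0\})\backslash\{1\}=C_1\backslash\{0,1\}$. Theorem~\ref{t2}$(c)$ together with $G_{\{0\}}(C_1)=\emptyset$ then gives
\[
G_{\{0\}}(L)=G_{\{0\}}(C_1)\cup\Big(N\big(C_1\backslash\{0,1\}\big)+N(C_2)\Big)=N\big(C_1\backslash\{0,1\}\big)+N(C_2).
\]
By Definition~\ref{d3}, the join of two null graphs $N(A)+N(B)$ is exactly the complete bipartite graph $K_{|A|,|B|}$, so $G_{\{0\}}(L)$ is complete bipartite, and a complete bipartite graph has diameter at most $2$ (vertices in the same part are joined through any vertex of the other part, and vertices in different parts are adjacent).

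Finally, counting vertices completes the identification: with $|C_1|=n+2$ we get $|C_1\backslash\{0,1\}|=n$, and $|C_2|=m$, so $G_{\{0\}}(L)=K_{n,m}=K_{m,n}$. The argument is in essence a bookkeeping application of Theorem~\ref{t2}; the only point requiring care—and the closest thing to an obstacle—is recognizing that the chain hypothesis makes $G_{\{0\}}(C_1)$ empty and thereby collapses the three-way split of Theorem~\ref{t2} to cases~$(a)$ and~$(c)$, together with the observation that $b=1$ forces $[b)$ to be the singleton $\{1\}$, so that precisely $\{0,1\}$ is deleted from $C_1$.
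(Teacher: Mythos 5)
Your proposal is correct and follows essentially the same route as the paper: both force $a=0$ by noting $V\big(G_{\{0\}}(C_1)\big)=\emptyset$ and invoking Theorem~\ref{t2}$(a)$ to get a contradiction, and then identify the graph as complete bipartite on the parts $C_1\backslash\{0,1\}$ and $C_2$. The only cosmetic difference is that the paper deduces the bipartite structure directly from the adjunct definition (every element of $C_1\backslash\{0,1\}$ meets every element of $C_2$ at $0$), while you obtain the same conclusion slightly more formally by specializing Theorem~\ref{t2}$(c)$ with $[1)=\{1\}$; both are sound.
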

\begin{proof} Let $L=C_1]_a ^1 C_2$ and  $G_{\{0\}}(L)\neq\emptyset$. Clearly $V(G_{\{0\}}(C_1))=\emptyset$. If $a\neq 0$, then by Theorem
\ref{t2}($a$), we have $G_{\{0\}}(L)=G_{\{0\}}(C_1)=\emptyset$, a contradiction. Therefore $a=0$.\\
  Now, every element of $C_1\backslash \{0,1\}$ is adjacent to each element of $C_2$. Hence $G_{\{0\}}(L)$ is
a complete bipartite graph, in fact $G_{\{0\}}(L)= K_{m,n}$ whenever
$|C_1|=n+2$ and $|C_2|=m$ where $m,n\in \mathbb{N}$.\end{proof}
\begin{nt} Let $\mathcal{M}_n=\{0,1,a_1,a_2,\cdots,a_n\}$ be a lattice such that $0<a_i<1$, for every  $i$,
 $i=1,2,\cdots,n$ with $a_i\wedge a_j=0$ and $a_i\vee a_j=1$ for
every $ i\neq j$.
\end{nt}
\begin{rem}If $L$ is an adjunct of more than two chains, then $G_{\{0\}}(L)$
need not be bipartite. Consider the lattice $L=\mathcal{M}_3$ depicted in Figure \ref{f6}. Consider $L=C_1]_0 ^1 C_2]_0 ^1 C_3$
where $C_1=\{0,x,1\}$, $C_2=\{y\}$ and $C_3=\{z\}$.  Then
$G_{\{0\}}(L)\cong K_3$, a non bipartite graph.\end{rem}
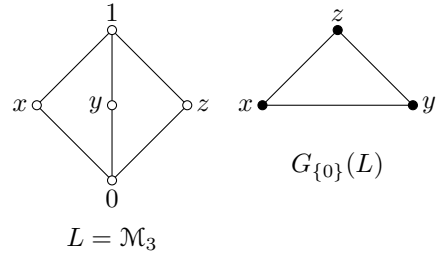
\begin{figure}[h]
\begin{tikzpicture}
\draw (1,0)--(1,2)--(0,1)--(1,0)--(2,1)--(1,2);\draw [fill=white]
(1,0) circle (.06);\draw [fill=white] (1,1) circle (.06);\draw
[fill=white] (1,2) circle (.06);\draw [fill=white] (0,1) circle
(.06);\draw [fill=white] (2,1) circle (.06);\node [above] at (1,2)
{$1$};\node [left] at (1,1) {$y$};\node [left] at (0,1)
{$x$};\node [right] at (2,1) {$z$};\node [below] at (1,0)
{$0$};\node [below] at (1,-.5) {$L=\mathcal{M}_3$}; \draw
(3,1)--(4,2)--(5,1)--(3,1);\draw [fill=black] (3,1) circle
(.06);\draw [fill=black] (4,2) circle (.06);\draw [fill=black]
(5,1) circle (.06);\node [left] at (3,1) {$x$};\node [above] at
(4,2) {$z$};\node [right] at (5,1) {$y$};\node [below] at (4,.5)
{$G_{\{0\}}(L)$};
\end{tikzpicture}
\caption{A dismantlable lattice with non-bipartite zero-divisor
graph}\label{f6}
\end{figure}

The concept of  dismantlable lattice was introduced by  Rival
\cite{17}.
\begin{defn}[Rival
\cite{17}]A finite lattice $L$ having $n$ elements is called {\it dismantlable},
if there exists a chain $L_1 \subset L_2 \subset \cdots \subset
L_n (= L)$ of sublattices of $L$ such that $| L_i| = i$, for all
$i$.\end{defn} The following structure theorem is due to Thakare,
Pawar and Waphare  \cite{18}.
\begin{st}[Theorem 2.2,  Thakare et at. \cite{18}]\label{st}A finite lattice is dismantlable if and only if it is an
adjunct of chains.\end{st}

From the above structure theorem and Corollary \ref{c1}, it is clear that if
the vertex set of a zero-divisor graph of adjunct of two chains is
nonempty then it is a lower dismantlable lattice in the following
sense.
\begin{defn}We call a dismantlable lattice $L$ to be a \textit{lower dismantlable} if it is a chain or
every adjunct pair in $L$ is of the form $(0,b)$ for some $b\in L$.
\end{defn}

It should be noted that any lattice of the form
$L=C_0]_0^{x_1}C_1]_0^{x_2}\cdots]_0^{x_n}C_n$ is always a lower
dismantlable lattice, where $C_i$'s are chains. Consider the
lattices depicted in Figure \ref{f7}. Observe that the lattice $L$
is lower dismantlable whereas the lattice $L'$ is not lower
dismantlable.
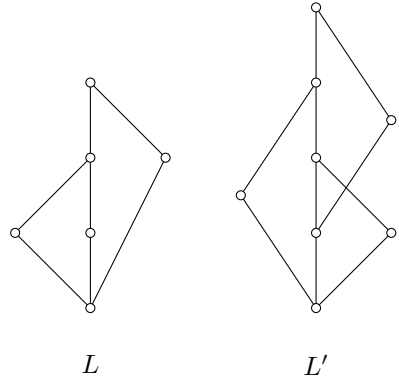
\begin{figure}[h]
\begin{tikzpicture}
\draw (1,0)--(1,3)--(2,2)--(1,0)--(0,1)--(1,2);\draw [fill=white]
(1,0) circle (.06); \draw [fill=white] (1,1) circle (.06);\draw
[fill=white] (1,2) circle (.06);\draw [fill=white] (1,3) circle
(.06);\draw [fill=white] (0,1) circle (.06);\draw [fill=white]
(2,2) circle (.06);\node [below] at (1,-.5) {$L$};
\draw(4,0)--(4,4)--(5,2.5)--(4,1);
\draw(4,3)--(3,1.5)--(4,0)--(5,1)--(4,2);\draw [fill=white] (4,0)
circle (.06);\draw [fill=white] (4,1) circle (.06);\draw
[fill=white] (4,2) circle (.06);\draw [fill=white] (4,3) circle
(.06);\draw [fill=white] (4,4) circle (.06);\draw [fill=white]
(5,2.5) circle (.06);\draw [fill=white] (3,1.5) circle (.06);\draw
[fill=white] (5,1) circle (.06);\node [below] at (4,-.5) {$L'$};
\end{tikzpicture}
\caption{Examples of  lower dismantlable  and non- lower
dismantlable lattice}\label{f7}
\end{figure}
\begin{defn}
An element $x$ in a lattice $L$ is \textit{join-reducible $($meet-reducible$)$} in $L$
 if there exist $y,z\in L$ both distinct from $x$, such that $y\vee z=x$ $(y\wedge z= x)$; $x$ is
 \textit{join-irreducible $($meet-irreducible$)$} if it is not join-reducible $($meet-reducible$)$;
 $x$ is \textit{doubly irreducible} if it is both join-irreducible and meet-irreducible.
 Therefore, an element $x$ is doubly reducible in a lattice $L$ if and only if $x$ has at most one
 lower cover or $x$ has at most one upper cover. The set of all meet-irreducible $($join-irreducible$)$
 elements in $L$ is denoted by \textit{$M(L)$ $(J(L))$}. The set of all doubly irreducible elements in $L$ is
 denoted by $Irr(L)$ and its  in $L$ is denoted by \textit{$Red(L)$}. Thus, if $x\in Red(L)$ then $x$
 is either join-reducible or meet-reducible.

 For an integer $n\geq 3$, a {\it crown} is a partially ordered set $\{x_1, y_1,x_2,y_2,\cdots, x_n, y_n\}$
 in which $x_i\leq y_i$, $x_{i+1}\leq y_i$, for $i=1,2,\cdots, n-1$ and $x_1\leq y_n$ are the
 only comparability relations (see Figure \ref{f8}).\end{defn}
 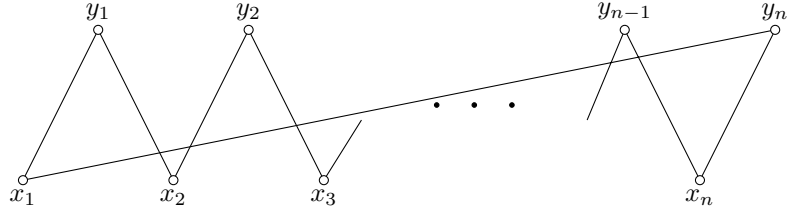
\begin{figure}[h]
 \begin{tikzpicture}
 \draw (0,0)--(1,2)--(2,0)--(3,2)--(4,0)--(4.5,.8);
 \draw (7.5,.8)--(8,2)--(9,0)--(10,2)--(0,0);
 \draw [fill=white](0,0) circle (.06);\draw [fill=white](1,2) circle (.06);\draw [fill=white](2,0) circle (.06);\draw [fill=white](3,2) circle (.06);
 \draw [fill=white](4,0) circle (.06);\draw [fill=white](8,2) circle (.06);\draw [fill=white](9,0) circle (.06);\draw [fill=white](10,2) circle (.06);
 \node [below] at (0,0) {$x_1$};\node [below] at (2,0) {$x_2$};\node [below] at (4,0) {$x_3$};\node [below] at (9,0) {$x_n$};
 \node [above] at (1,2) {$y_1$};\node [above] at (3,2) {$y_2$};\node [above] at (8,2) {$y_{n-1}$};\node [above] at (10,2) {$y_n$};

 \draw [fill=black]
 (5.5,1) circle (.03); \draw [fill=black] (6,1) circle (.03);\draw
 [fill=black] (6.5,1) circle (.03);
 \end{tikzpicture}
 \caption{Crown of order $2n$}\label{f8}
 \end{figure}

Note that if $L$ is lower dismantlable lattice with the greatest element 1 as a join-reducible element, then it is easy to observe that
every nonzero nonunit element of $L$ is a vertex of $G_{\{0\}}(L)$.

 The following lemma gives the properties of lower dismantlable
 lattices which will be used in the sequel frequently.
\begin{lem}\label{l1}Let $L=C_0]_0^{x_1}C_1]_0^{x_2}\cdots]_0^{x_n}C_n$ be a lower dismantlable lattice,
where $C_i$'s are chains. Then for nonzero elements $a,b\in L$, we have.
\begin{enumerate}
\item[$i)$] $a\wedge b=0$ if and only if $a||b$ $($where $a||b$ means $a$ and $b$ are incomparable$)$.
\item[$ii)$] Let $a\in C_i$ with $b\notin C_i$, then $a\leq b$ if and
only if $x_i\leq b$.
\item[$iii)$] If $(0,1)$ is an adjunct pair $(${\it i.e.}, $x_i=1$ for
some $i\in \{1,2,\cdots, n\})$, then
$|V\left(G_{\{0\}}(L)\right)|=|L|-2$.
\end{enumerate}\end{lem}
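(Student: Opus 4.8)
The plan is to read off everything from the four defining clauses of the adjunct order, specialized to the case $a=0$ that holds at every stage. Writing $L^{(k)}=C_0]_0^{x_1}\cdots]_0^{x_k}C_k$, so that $L=L^{(n)}$ and each $L^{(k)}=L^{(k-1)}]_0^{x_k}C_k$ is the adjunct of $C_k$ (playing the role of $L_2$) to $L^{(k-1)}$ (playing the role of $L_1$) with pair $(0,x_k)$, the two relevant clauses become: for $u\in L^{(k-1)}$ and $v\in C_k$ one has $u\le v$ iff $u=0$, and $v\le u$ iff $x_k\le u$. Since every nonzero element lies in exactly one $C_k$ and the adjunct operation preserves all order relations already present, every comparison in $L$ reduces to one of these clauses at the appropriate stage.

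For $(ii)$, with $a\in C_i$ and nonzero $b\notin C_i$, I would split on the chain $C_j$ containing $b$. If $j<i$, then at stage $i$ we have $b\in L^{(i-1)}$ and $a\in C_i$, and the clause $v\le u\Leftrightarrow x_i\le u$ gives $a\le b\Leftrightarrow x_i\le b$ directly. If $j>i$, then at stage $j$ we have $a,x_i\in L^{(j-1)}$ and $b\in C_j$, so $a\le b$ would force $a=0$ and $x_i\le b$ would force $x_i=0$; as $a\ne0$ and $x_i\ne0$ (an adjunct top is never $0$), both sides of the claimed equivalence are false. For $(i)$ the forward implication is immediate: comparable nonzero elements have nonzero meet. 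For the converse I would induct on $n$. The base case is a chain, where no two elements are incomparable. In the step, write $L=L']_0^{x_n}C_n$ with $L'=L^{(n-1)}$; if $a,b\in L'$ the claim follows from the induction hypothesis since $L'$ is a sublattice and meets of its elements are computed in $L'$; if both lie in $C_n$ they are comparable; and if $a\in L'$, $b\in C_n$ with $a\|b$, then any nonzero common lower bound $c$ would satisfy $c\le b$, forcing $c\in C_n$ (the only element of $L'$ below $b$ is $0$), whence $x_n\le a$ by the clause above, making $b\le a$ and contradicting $a\|b$. Thus $a\wedge b=0$. The point that makes the induction clean is that $C_n$ is the \emph{last} chain, so nothing is adjuncted below it and the common lower bounds collapse to $\{0\}$.

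For $(iii)$ I would first turn the hypothesis into a statement about lower covers. Because $(0,1)$ is a \emph{legitimate} adjunct pair at stage $i$, the adjunct definition forces $0\not\prec1$ in $L^{(i-1)}$, and the operation adds the covering $1_{C_i}\prec1$. Hence $1_{C_i}$ is a lower cover of $1$ in $L$, and since $0\not\prec1$ in $L^{(i-1)}$ the top $1$ also has a lower cover $c\ne0$ there; no later chain element lies above the nonzero $c$, so $c\prec1$ persists in $L$. Thus $1$ has two distinct nonzero lower covers $c_1:=1_{C_i}$ and $c_2:=c$; being distinct covers of $1$ they are incomparable with $c_1\vee c_2=1$, and by part $(i)$ we get $c_1\wedge c_2=0$. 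Now take any $u\in L\setminus\{0,1\}$ and suppose it were comparable to both $c_1$ and $c_2$: the four sign-combinations give, respectively, $u\ge c_1\vee c_2=1$, $u\le c_1\wedge c_2=0$, $c_1\le c_2$, or $c_2\le c_1$, each impossible. Hence $u$ is incomparable to $c_1$ or to $c_2$, so by $(i)$ it has a nonzero zero-divisor partner and is a vertex. As $0\notin V(G_{\{0\}}(L))$ by definition and $1\notin V(G_{\{0\}}(L))$ because $1\wedge y=y\ne0$ for nonzero $y$, we conclude $V(G_{\{0\}}(L))=L\setminus\{0,1\}$ and $|V(G_{\{0\}}(L))|=|L|-2$.

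The main obstacle I anticipate is the degenerate bookkeeping in $(iii)$: one must not read ``$x_i=1$'' naively, since $(0,1)$ is only an admissible adjunct pair when $0\not\prec1$ in $L^{(i-1)}$, and it is precisely this admissibility that supplies the \emph{second} lower cover of $1$ and hence its join-reducibility. Without it one could attach a chain so as to make $L$ itself a chain, where $|V(G_{\{0\}}(L))|=0\ne|L|-2$. A secondary care-point, recurring in $(i)$ and $(ii)$, is keeping straight which of $L^{(k-1)}$ and $C_k$ plays $L_1$ versus $L_2$ at each stage, so that the correct one of the two cross-clauses is applied.
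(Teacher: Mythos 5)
Your proposal is correct, but it takes a genuinely different route from the paper, and in places a more careful one. For part $(i)$ the paper argues in one line: if $a||b$ and $a\wedge b\neq 0$, then $a\wedge b$ would have to occur as the bottom element of some adjunct pair in the representation of $L$, contradicting lower dismantlability; your induction on the number of chains, peeling off the last chain $C_n$ and checking that cross common lower bounds of an incomparable pair collapse to $\{0\}$, is a rigorous elaboration of that ``it is clear'' step. For part $(ii)$ the paper never invokes the stage decomposition: it deduces the claim from $(i)$ by trichotomy on $x_i$ versus $b$ (namely $x_i\le b$, $b\le x_i$, or $x_i||b$), whereas you read both directions off the adjunct order clauses at stages $i$ and $j$; both work, yours is more mechanical, the paper's reuses $(i)$ as the workhorse. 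The most substantive difference is in $(iii)$: the paper merely says that since $(0,1)$ is an adjunct pair and $a\wedge b=0$ iff $a||b$, every element of $L\setminus\{0,1\}$ is a vertex, which conceals exactly the point you isolate --- one needs two incomparable nonzero lower covers $c_1=1_{C_i}$ and $c_2$ of $1$, so that every $u\in L\setminus\{0,1\}$ is incomparable to one of them. This extra care matters, because elements of chains adjuncted after $C_i$ can be comparable to elements of $C_i$ (e.g.\ a later chain attached with pair $(0,x_k)$ where $x_k\in C_i$), so a naive reading that everything outside $C_i$ is incomparable to all of $C_i$ would be false; your four-case argument via $c_1\vee c_2=1$ and $c_1\wedge c_2=0$ closes this gap cleanly. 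In short, the paper's proof is shorter and leans on structural appeals, while yours is self-contained, inductive, and fills real gaps, especially in $(iii)$.
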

\begin{proof}$i)$Suppose $a||b$ and $a\wedge b\neq
0$. It is clear that there is an adjunct pair $(a_1,b_1)$ in the
adjunct representation of $L$ such that $a_1=a\wedge b\neq 0$, a
contradiction to the definition of lower
dismantlability of $L$. The converse is obvious.\\
$ii)$ Let $a\in C_i$ and  $b\notin C_i$ with $x_i\leq b$. As $C_i$ is joined at $x_i$,
we must have $a\leq x_i$. Hence $a\leq b$. Conversely, suppose
$a\leq b$. Now, $a\in C_i$ and $b\notin C_i$. If $x_i\nleq b$, we
have either $b\leq x_i$ or $x_i||b$. The second case is
impossible, by $(i)$ above, as it gives $a\wedge b=0$, since $a\leq x_i$. Also, it is given that
$b\notin C_i$, hence $a||b $ which yields $a\wedge b=0$, a contradiction to the fact that $a\leq b$ and $a\neq 0$. Therefore
$x_i\leq b$.\\
$iii)$ As $L$ is a lower dismantlable lattice having $(0,1)$ as an
adjunct pair, $L$ contains at least two chains in its adjunct
representation. Also $a\wedge b=0$ if and only if $a||b$. Hence
any $a\in L\backslash \{0,1\}$ is in $V\left(G_{\{0\}}(L)\right)$.
and consequently $|V\left(G_{\{0\}}(L)\right)|=|L|-2$.
\end{proof}

 Now, we recall some definitions
from graph theory.
\begin{defn}A {\it cycle} is a graph with an equal number of
vertices and edges whose vertices can be placed around a circle so
that two vertices are adjacent if and only if they appear
consecutively along the circle. The {\it girth } of of a graph
with cycle, written $gr(G)$, is the length of its shortest cycle.
A graph with no cycle has infinite  grith. A graph with no cycle
is \textit{acyclic}. A \textit{tree} is a connected acyclic graph.
A tree is called a \textit{rooted tree} if one vertex has been
designated the root, in which case the edges have a natural
orientation, towards or away from the root. A vertex $w$ of a
rooted tree is called an \textit{ancestor} of $v$ if $w$ is on the
unique path from $v$ to the root of the tree; see West \cite{6}.

 Let $T$ be a rooted tree with the root $R$ has at least
 two branches. Let $G(T)$ be the {\it non-ancestor graph} of $T$, \textit{i.e.}, $V(G(T))=T\backslash \{R\}$ and two vertices
 are adjacent if and only if no one is an ancestor of the other. Denote the class of non-ancestor graphs of rooted trees by $\mathcal{G_T}$.
 The \textit{cover graph} of a lattice $L$, denoted by
  $CG(L)$, is the graph whose vertices are the elements of $L$ and whose edges are the pairs $(x,y)$ with $x,y\in L$
  satisfying $x\prec y$ or $y\prec x$. The \textit{comparability graph} of a lattice $L$, denoted by $C(L)$, is the graph whose
  vertices are the elements of $L$ and two vertices $x$ and $y$ are adjacent if and only if $x$ and $y$ are comparable.
  The complement of the comparability graph $C(L)$, {\it i.e.}, $C(L)^c$, is called the \textit{incomparability graph} of $L$.\end{defn}

The following result is due to Kelly and Rival
\cite{10}.
\begin{thm}[Theorem 3.1, Kelly and Rival
\cite{10}]\label{t3}A finite lattice is dismantlable if and only if it contains no crown.\end{thm}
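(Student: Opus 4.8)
The plan is to recast the equivalence entirely in terms of doubly irreducible elements, using one elementary observation: for a finite lattice $L$ and $z\in L$, the set $L\setminus\{z\}$ is a sublattice if and only if $z$ is doubly irreducible. Indeed, if $z=a\vee b$ with $a,b\neq z$ then $a,b\in L\setminus\{z\}$ but $a\vee b\notin L\setminus\{z\}$, so closure under $\vee$ fails, and dually for $\wedge$; conversely if $z$ is doubly irreducible no join or meet of elements distinct from $z$ can equal $z$. Hence the dismantling chain $L_1\subset\cdots\subset L_n=L$ of the definition deletes, at each stage, a doubly irreducible element of the current sublattice, and $L$ is dismantlable precisely when such deletions reduce $L$ to a point. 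I would prove both implications by induction on $|L|$, the cases $|L|\le 5$ being vacuous since a crown has at least $6$ elements.

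For the implication \emph{dismantlable $\Rightarrow$ no crown}, write $L\setminus\{z\}$ with $z$ doubly irreducible and $L\setminus\{z\}$ dismantlable; by the induction hypothesis $L\setminus\{z\}$ contains no crown. Thus the only way $L$ could contain a crown $C=\{x_1,y_1,\dots,x_n,y_n\}$ is with $z\in C$, and it suffices to prove the key lemma: if $z$ is doubly irreducible and lies in a crown $C$ of $L$, then $L\setminus\{z\}$ still contains a crown. Say $z=y_1$ (the remaining cases are symmetric or dual). Being doubly irreducible, $z$ is in particular join-irreducible, so it has a unique lower cover $c$; since $x_1,x_2<y_1$ we get $x_1,x_2\le c<y_1$. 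I claim $C'=(C\setminus\{y_1\})\cup\{c\}$ is again a crown, which reduces to verifying that $c$ is incomparable to every crown element other than $x_1,x_2$. Using the crown relations $x_j\le y_i$ if and only if $i\in\{j,j-1\}$ (indices modulo $n$), one checks that $c\ge x_j$ for $j\neq 1,2$ would force $x_j<y_1$, hence $j\in\{1,2\}$, and that $c\le y_j$ for $j\neq 1$ would force both $x_1\le y_j$ and $x_2\le y_j$, i.e. $j\in\{1,n\}\cap\{1,2\}=\{1\}$; both are impossible for $n\ge 3$. So $C'$ is a crown in $L\setminus\{z\}$, contradicting the induction hypothesis.

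For the converse \emph{no crown $\Rightarrow$ dismantlable}, the induction together with the deletion fact reduces everything to the main lemma: \emph{every finite lattice with at least two elements and no doubly irreducible element contains a crown.} Granting it, a crown-free $L$ has a doubly irreducible $z$, the sublattice $L\setminus\{z\}$ is still crown-free (crown-freeness passes to subposets), and by induction it is dismantlable, whence so is $L$. To prove the main lemma I would argue the contrapositive: assuming every element is join-reducible or meet-reducible, build a maximal alternating fence $a_0<b_1>a_1<b_2>\cdots$ in the Hasse diagram, at each stage using reducibility of the current extremal vertex on the appropriate side to supply a second cover and advance without backtracking; finiteness forces the fence to close into a cycle, and passing to a shortest closed fence and using lattice joins and meets to excise any chord yields a minimal cycle of length $\ge 6$ whose only comparabilities are the consecutive ones, i.e. a crown. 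This last step is the main obstacle: ruling out chords so that the closed fence is a genuine crown rather than a shorter cycle carrying extra comparabilities is the delicate heart of the argument, and it is precisely here that minimality of the chosen fence and the global absence of doubly irreducible elements must be combined with care.
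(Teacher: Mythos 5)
The paper itself offers no proof of this statement: it is quoted as Theorem 3.1 of Kelly and Rival \cite{10}, so your attempt can only be judged on its own merits. Your preliminary work is sound. The observation that $L\setminus\{z\}$ is a sublattice exactly when $z$ is doubly irreducible correctly recasts dismantlability as successive deletion of doubly irreducible elements, and your proof of the implication (dismantlable $\Rightarrow$ no crown) is complete: the crown-repair lemma, replacing a doubly irreducible $y_1$ by its unique lower cover $c$ and checking that $c$ is incomparable to every $x_j$ with $j\neq 1,2$ and every $y_j$ with $j\neq 1$, is verified correctly, and the induction closes.

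The gap is the converse. Your ``main lemma'' (a finite lattice with at least two elements and no doubly irreducible element contains a crown) is not a reduction of the hard direction; it \emph{is} the hard direction, and you do not prove it --- you say yourself that the chord-excision step is ``the main obstacle.'' Moreover, the sketch would fail as written, because a shortest closed alternating fence need not have length at least $6$. Take the Boolean lattice $B_4$ on atoms $a,b,c,d$: it has no doubly irreducible element (each atom is a meet of two coatoms, e.g. $(a\vee b)\wedge(a\vee c)=a$, dually for coatoms, and all intermediate elements are reducible on both sides), so your lemma must apply to it; yet the atoms $a,b$ together with the incomparable coatoms $a\vee b\vee c$ and $a\vee b\vee d$ form a closed fence of length $4$. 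Since a crown is defined by \emph{comparabilities} (not covers), any chord-excision or minimization argument ultimately works at the comparability level, and there it will select exactly such degenerate quadrilaterals, which are not crowns (a crown needs $n\geq 3$) and cannot be refined into one. A correct proof must rule out, or maneuver around, pairs of incomparable elements having two incomparable common upper bounds; this is precisely where Kelly and Rival's argument does its real work, and it is absent here. As it stands, your proposal establishes only one of the two implications.
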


In the following theorem, we characterize the zero-divisor graph
$G_{\{0\}}(L)$ of a lower dismantlable lattice $L$ in terms of the
cover graph $CG(L)$ and the incomparability graph $C(L)^c$.
\begin{thm}\label{t4} The following statements are equivalent for a finite lattice $L$ with 1 as a join-reducible element.
\begin{enumerate}
\item[$(a)$] $L$ is a lower dismantlable lattice. \item[$(b)$]
Every nonzero element of $L$ is a meet-irreducible element.
\item[$(c)$] The cover graph $CG(L\backslash \{0\})$ of $L\backslash \{0\}$ is a tree.\item[$(d)$] The zero-divisor
graph of $L$ coinsides with the incomparability graph of $L\backslash
\{0,1\}$.
\end{enumerate}
\end{thm}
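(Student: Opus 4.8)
The plan is to establish the cycle $(a)\Rightarrow(b)\Rightarrow(c)\Rightarrow(d)\Rightarrow(a)$, together with the reverse implication $(c)\Rightarrow(b)$ so that $(b)$ and $(c)$ become freely interchangeable. Throughout I rely on the standard finite-lattice reformulation that a nonzero element $x$ is meet-irreducible if and only if it has exactly one upper cover, equivalently the principal filter $[x)=\{z\in L: z\geq x\}$ is a chain; this is how I intend to convert statement $(b)$ into assertions about covers and filters.

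For $(a)\Rightarrow(b)$: if some nonzero $x$ had two upper covers $y_1\neq y_2$, then $y_1\wedge y_2=x$ and $y_1||y_2$, contradicting Lemma \ref{l1}$(i)$, which forces $y_1\wedge y_2=0$. For $(b)\Leftrightarrow(c)$: deleting $0$ does not alter the cover relations among nonzero elements, so under $(b)$ each nonzero $x\neq 1$ has exactly one upper cover in $L\setminus\{0\}$ while $1$ has none, giving precisely $|L\setminus\{0\}|-1$ edges; since repeatedly passing to the upper cover from any vertex reaches $1$, the graph is connected, hence a tree. Conversely, if $CG(L\setminus\{0\})$ is a tree and some nonzero $x$ had two upper covers, the two ascending cover-chains from those covers up to $1$ would furnish two distinct $x$–$1$ paths, creating a cycle, which is impossible.

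For $(c)\Rightarrow(d)$: since $(c)$ yields $(b)$, every nonzero element is meet-irreducible, so every filter $[m)$ with $m\neq 0$ is a chain. I first check that the edge sets agree: if $x||y$ but $m=x\wedge y\neq 0$, then $x,y\in[m)$ would force $x,y$ comparable, a contradiction, while $x\wedge y=0\Rightarrow x||y$ is immediate. For the vertex sets I use that $1$ is join-reducible, hence has two distinct lower covers; given $x\in L\setminus\{0,1\}$, the chain $[x)$ reaches $1$ through a single dual atom $d$, so any second dual atom $d'$ is incomparable to $x$, whence $x\wedge d'=0$ and $x$ is a vertex of $G_{\{0\}}(L)$. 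Thus both graphs have vertex set $L\setminus\{0,1\}$ and the same edges, so they coincide.

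For $(d)\Rightarrow(a)$: the hypothesis gives $x||y\Rightarrow x\wedge y=0$ for all $x,y$. First I exclude crowns: every element of a crown lies in $L\setminus\{0,1\}$, and for the configuration $x_2\leq y_1$, $x_2\leq y_2$ with $y_1||y_2$ one obtains $x_2\leq y_1\wedge y_2=0$, impossible; by Theorem \ref{t3} and the Structure Theorem, $L$ is therefore an adjunct of chains. It remains to force every adjunct pair to have the shape $(0,b)$. If a pair $(a,b)$ satisfied $a\neq 0$, then since $a\not\prec b$ in the relevant $L_1$ there is $c\in L_1$ with $a<c<b$; for any $u$ in the adjoined chain $L_2$ the adjunct order of Definition \ref{d1} gives $c||u$ while $c\wedge u=a\wedge c=a\neq 0$, the meet being computed correctly in $L$ because each intermediate stage embeds as a sublattice, contradicting $(d)$. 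Hence all pairs are $(0,b)$ and $L$ is lower dismantlable. I expect this final step — showing that $a\neq 0$ always manufactures an incomparable pair with nonzero meet, with the meet evaluated correctly inside the full lattice — to be the main obstacle, as it is the only point demanding a careful reading of the adjunct construction.
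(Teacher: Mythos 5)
Your proof is correct in substance, but it follows a genuinely different route from the paper's, so a comparison is in order. The paper proves $(a)\Rightarrow(b)$, $(b)\Rightarrow(c)$, $(c)\Rightarrow(a)$ and, separately, $(b)\Leftrightarrow(d)$; your cycle runs $(a)\Rightarrow(b)\Rightarrow(c)\Rightarrow(d)\Rightarrow(a)$. The two $(a)\Rightarrow(b)$ arguments are essentially identical (both reduce to Lemma \ref{l1}$(i)$). Where you differ most is $(b)\Leftrightarrow(c)$: the paper rules out cycles in $CG(L\backslash\{0\})$ by a three-case analysis on how a putative cycle sits in the order, which is laborious and not obviously exhaustive; your count (each nonzero $x\neq 1$ contributes exactly one upward cover edge, so $|L\backslash\{0\}|-1$ edges, plus connectivity via iterated upper covers) is shorter and cleaner, and your converse via two distinct $x$--$1$ paths is the standard tree characterization. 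For closing the loop, the paper gets back to $(a)$ from $(c)$ (crowns give cycles, then incomparable pairs with nonzero meet give cycles) and handles $(d)$ by the very short observation that a nonzero meet $b=a\wedge c$ forces an edge $a$--$c$ in the incomparability graph but not in $G_{\{0\}}(L)$; you instead prove $(d)\Rightarrow(a)$ directly, excluding crowns by the meet-zero property and then explicitly showing that an adjunct pair $(a,b)$ with $a\neq 0$ produces $c$ with $a<c<b$ and $c\,\|\,u$, $c\wedge u=a\neq 0$ for $u$ in the adjoined chain. This is longer than the paper's $(d)\Rightarrow(b)$, but it buys something the paper leaves implicit: the passage from ``incomparable elements have meet $0$'' to ``every adjunct pair is of the form $(0,b)$'' is exactly the point the paper glosses over, and your sublattice-embedding justification for computing the meet in the full lattice is the right way to make it rigorous. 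Your vertex-set verification in $(c)\Rightarrow(d)$ (using the two dual atoms supplied by join-reducibility of $1$) is also more careful than the paper, which disposes of this in a one-line observation before Lemma \ref{l1}.

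One mis-statement should be repaired, though it is harmless where you use it. Your preamble asserts that a nonzero element $x$ is meet-irreducible ``equivalently the principal filter $[x)$ is a chain.'' That pointwise equivalence is false: in the lattice $0<x<u$ with $u\prec v_1\prec 1$, $u\prec v_2\prec 1$ and $v_1\,\|\,v_2$, the element $x$ is meet-irreducible but $[x)$ is not a chain. What you actually need in $(c)\Rightarrow(d)$ is the global statement: if \emph{every} nonzero element of $L$ is meet-irreducible, then $[m)$ is a chain for every $m\neq 0$. This is true, with a one-line proof that bypasses covers entirely: if $x,y\in[m)$ and $x\,\|\,y$, then $x\wedge y\geq m>0$ and $x\wedge y\notin\{x,y\}$, so $x\wedge y$ is a nonzero meet-reducible element, contradicting $(b)$. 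With that substitution your argument is complete.
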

\begin{proof} $(a)\Rightarrow (b)$ Let $L$ be a lower dismantlable lattice. Then
by Structure Theorem,\\ $L= C_1]_{0}^{a_1}
C_2]_{0}^{a_2}\cdots]_{0}^{a_{n-1}} C_n$, where each $C_i$ is a chain.
Let $a(\neq 0)\in L$ be an element which is not meet-irreducible.
Then $a=b\wedge c$ for some $b,c\neq a$. But then $b||c$. By Lemma
\ref{l1}, $a=b\wedge c=0$, a contradiction to $a\neq 0$. Hence every
nonzero element of $L$ is meet
irreducible element.\\
$(b)\Rightarrow (c)$ Let $CG(L\backslash \{0\})$ be the cover graph of $L\backslash \{0\}$ and let
$C: a_1-a_2-\cdots-a_n-a_1$ be a cycle in $CG(L\backslash \{0\})$.
For distinct $a_1,a_2,a_3$ we have the following three cases.\\
\textbf{Case $(1)$:} Let $a_1\prec a_2\prec a_3$ be a chain. Then for $a_4$,
we have either $a_1\prec a_2\prec a_3\prec a_4$ is a chain or
$a_1\prec a_2\prec a_3$ and $a_4\prec a_3$. Then, $a_{i+1}\leq a_i
$, for all $i\geq 4$; otherwise we get $a_4$ as a meet reducible
element. Hence $a_n\leq a_{n-1}$ and $a_n=a_{n-1}\wedge a_1\neq 0$,
as $C: a_1-a_2-\cdots-a_n-a_1$ is a cycle. This contradicts the fact
that every nonzero element is meet-irreducible. On the other hand, if $a_1\prec a_2\prec a_3\prec
a_4$ is chain, then using the above arguments, we have $a_1\prec
a_2\prec a_3\prec a_4\prec a_5$ is a chain, Continuing in this way
we get $a_1\prec a_2\prec \cdots\prec a_n$ is a chain and $a_1\leq
a_n$. Hence $a_1$ and $a_n$ can not be adjacent, a contradiction to
the fact that $C$ is a
cycle in $CG(L\backslash \{0\})$.\\
\textbf{Case $(2)$:} $a_2= a_1 \wedge a_3$, which is impossible, as $a_1\wedge a_3=a_2\neq 0$, a
contradiction to lower  dismantlability of $L$.\\
\textbf{Case $(3)$:} $a_2= a_3\vee a_1$. Note that $a_3\nleq a_4$
otherwise $a_3$ is the meet of $a_2$ and $a_4$, a contradiction to the fact that every nonzero element is meet-irreducible.
Hence $a_4\leq a_3$. In fact $a_{m+1}\leq a_m$, for $m\geq 3$. Using
the above arguments, we again obtain a contradiction. Hence
$CG(L\backslash \{0\})$ is a connected acyclic graph. Therefore it is
a tree.\\
$(c)\Rightarrow (a)$ If $L$ contains a crown, then $CG(L\backslash \{0\})$ contains a
cycle, a contradiction. Hence $L$ does not contain a crown. By
applying Theorem \ref{t3}, $L$ is a dismantlable lattice. Now, let
$a$ and $b$ be incomparable elements of $L$. Suppose that $a\wedge
b\neq 0$. Let $a\wedge b=a_1\prec a_2\prec\cdots\prec a_i=a\prec
\cdots\prec a_n=a\vee b$, be a covering and also $a\wedge b=b_1\prec
b_2\prec\cdots\prec b_j=b\prec\cdots\prec b_m=a\vee b$, be another
covering, distinct from the first covering (such coverings exist,
since $a||b$ ). Then $a_1- a_2-\cdots- a_n=b_m-b_{m-1}-\cdots-b_1=a_1$ is
a cycle in $CG(L\backslash \{0\})$, a contradiction to the fact that
$CG(L\backslash \{0\})$
is a tree. Thus $L$ is a lower dismantlable lattice.\\
$(d)\Rightarrow (b)$ Suppose $G_{\{0\}}(L)=C(L\backslash
\{0,1\})^c$, for some lattice $L$. We want to show
that $L$ does not contain any nonzero meet-reducible element. Suppose on
the contrary, $L$ has a nonzero meet-reducible element say $b$. Then there
exist $a, c \in L$ with $a, c \neq b$, such that $b = a\wedge c$.
Thus $a$ and $c$ are incomparable. So there is an edge $a-c$ in
$C(L\backslash \{0,1\})^c$. But $a\wedge c\neq 0$, hence $a$ and $c$
are not adjacent in $G_{\{0\}}(L)$, a contradiction. Consequently every
nonzero element of $L$ is a meet-irreducible element.\\
$(b)\Rightarrow (d)$ Suppose every nonzero element of $L$ is meet
irreducible. By the equivalence of $(a)$ and $(b)$ above,
$L$ is a lower dismantlable lattice. Hence $a\wedge b=0$ if and only if
$a||b$. Therefore $G_{\{0\}}(L)=C(L\backslash \{0,1\})^c$.\end{proof}

Note that a result similar to the equivalence of statements $(b)$ and $(d)$
 of Theorem \ref{t4} can be found in Survase \cite{22}.

Grillet and Varlet \cite{19} introduced the concept of
0-distributive lattices as a generalization of distributive
lattices. A lattice $L$ with 0 is called \emph{ 0-distributive}
if, for every triplet $(a, b, c)$ of elements of $L$, $a\wedge b =
a\wedge c = 0$ implies $a\wedge (b\vee c) = 0$. More details about
0-distributive posets can be found in Joshi and Waphare \cite{20}.
Forbidden configurations for 0-distributive lattices are
obtained by Joshi \cite{21}.
\begin{lem}\label{l2}If $L$ is an adjunct of two chains with $(0,1)$ as an
adjunct pair, then $L$ is 0-distributive.
\end{lem}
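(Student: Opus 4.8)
The plan is to exploit the very rigid shape of $L=C_1]_0^1C_2$ together with Lemma \ref{l1}$(i)$. First I would describe $L$ explicitly. Applying the adjunct definition (Definition \ref{d1}) with adjunct pair $(0,1)$, the least element $0$ of $C_1$ becomes the least element of $L$ and the greatest element $1$ of $C_1$ becomes the greatest element of $L$, while the remaining elements split into two ``arms'', namely $C_1\setminus\{0,1\}$ and $C_2$ (note $0\notin C_2$), each of which is a chain and hence internally comparable. Reading off the adjunct order relations, an element $u\in C_1\setminus\{0,1\}$ and an element $v\in C_2$ satisfy $u\leq v$ only if $u\leq 0$ and $v\leq u$ only if $1\leq u$, so $u\|v$ always; thus the only incomparable pairs in $L$ are the cross-arm ones. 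Since the single adjunct pair $(0,1)$ is of the form $(0,b)$, $L$ is lower dismantlable, so Lemma \ref{l1}$(i)$ applies: for nonzero $x,y$ we have $x\wedge y=0$ if and only if $x\|y$.

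Next I would take a triplet $(a,b,c)$ with $a\wedge b=a\wedge c=0$ and clear away the degenerate cases. If $a=0$ the conclusion $a\wedge(b\vee c)=0$ is immediate; if $b=0$ then $b\vee c=c$ and $a\wedge(b\vee c)=a\wedge c=0$, and symmetrically if $c=0$. Hence I may assume $a,b,c$ are all nonzero; moreover $a\neq 1$, for $a=1$ would force $a\wedge b=b=0$, contrary to $b\neq 0$. So $a\in L\setminus\{0,1\}$ lies in the interior of exactly one arm.

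The heart of the argument is a chain-separation step. By Lemma \ref{l1}$(i)$, $a\|b$ and $a\|c$. Say $a\in C_1\setminus\{0,1\}$, the case $a\in C_2$ being symmetric. Since the only elements of $L$ incomparable to $a$ lie in the opposite arm $C_2$ (every element of $C_1$, together with $0$ and $1$, is comparable to $a$), the incomparabilities $a\|b$ and $a\|c$ force $b,c\in C_2$. But $C_2$ is a chain, so $b$ and $c$ are comparable; without loss of generality $b\vee c=c$, and therefore $a\wedge(b\vee c)=a\wedge c=0$. This establishes $0$-distributivity. I expect no genuine obstacle here: the only delicate point is verifying that $a\|b$ really pins $b$ to the opposite arm, which is exactly the statement that every incomparability in $L$ is cross-arm, and this reads off directly from the adjunct order relations. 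Everything else is a short finite case check.
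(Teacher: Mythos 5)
Your proof is correct and takes essentially the same route as the paper: both arguments reduce the problem, via Lemma \ref{l1}$(i)$, to showing that $b$ and $c$ must be comparable, whence $b\vee c\in\{b,c\}$ and $a\wedge(b\vee c)=a\wedge b=0$ or $a\wedge c=0$. The only difference is how comparability is justified --- the paper argues by contradiction that $b\wedge c=0$ would force $L$ to be an adjunct of at least three chains, while you read it off directly from the cross-arm description of incomparability in $C_1]_0^1C_2$; this, together with your explicit treatment of the degenerate cases involving $0$ and $1$ (which the paper leaves implicit, as Lemma \ref{l1} concerns nonzero elements), makes your write-up a slightly more detailed rendering of the same argument.
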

\begin{proof}Let  $a\wedge b =
a\wedge c = 0$, for $a,b,c \in L$. By Lemma \ref{l1}, $b$ and $c$
are either comparable or $b\wedge c=0$. If $b\wedge c=0$, this
together with $a\wedge b = a\wedge c = 0$ gives $L$ as adjunct of at
least three chains, a contradiction. Hence $b$ and $c$ are comparable. By $a\wedge b=a\wedge c=0$, we have $a\wedge (b\vee c)=0$.
\end{proof}
The following result is due to Joshi \cite{7}.
\begin{thm}[Theorem 2.14, Joshi \cite{7}]\label{t5}Let $L$ be a 0-distributive lattice. Then $G_{\{0\}}(L)$ is complete
bipartite if and only if there exist two minimal prime ideals
$P_1$ and $P_2$ of $L$ such that $P_1\cap P_2=\{0\}$.
\end{thm}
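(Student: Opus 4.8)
The plan is to prove the two implications separately. The only external tool I need is the standard fact that in a $0$-distributive lattice the annihilator $x^{\perp}=\{z\in L : z\wedge x=0\}$ of any element is an ideal (downward closure is automatic, and closure under joins is precisely the hypothesis of $0$-distributivity), together with the definition that a prime ideal is minimal when no prime ideal lies strictly below it.

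For the backward implication, assume $P_1,P_2$ are minimal primes with $P_1\cap P_2=\{0\}$, and set $V_i:=P_i\setminus\{0\}$. Since $P_1$ and $P_2$ are distinct, neither equals $\{0\}$, so both $V_i$ are nonempty, and they are disjoint because $P_1\cap P_2=\{0\}$. Completeness across the parts is a one-line computation: for $a\in V_1$, $b\in V_2$ the meet $a\wedge b$ lies in both ideals, hence in $P_1\cap P_2=\{0\}$, so $a\wedge b=0$ and $a,b$ are adjacent. Conversely, for any pair of adjacent vertices $x,y$ we have $x\wedge y=0\in P_i$, so primality gives $x\in P_i$ or $y\in P_i$ for each $i$; as $P_1\cap P_2=\{0\}$ forbids a nonzero vertex from lying in both primes, exactly one of $x,y$ lies in $V_1$ and the other in $V_2$. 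This simultaneously shows $V(G_{\{0\}}(L))=V_1\cup V_2$, that no edge joins two vertices of the same part, and that every cross pair is an edge; hence $G_{\{0\}}(L)$ is complete bipartite.

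For the forward implication, let $G_{\{0\}}(L)$ be complete bipartite with (nonempty) parts $V_1,V_2$ and put $P_i=V_i\cup\{0\}$. The decisive step is the annihilator identity $x^{\perp}=V_2\cup\{0\}=P_2$ for every $x\in V_1$: if $z\wedge x=0$ with $z\neq0$ then $z$ is a zero-divisor adjacent to $x$, so $z\in V_2$, and conversely completeness makes every element of $V_2$ annihilate $x$. Thus each $P_i$ is an annihilator, hence an ideal by $0$-distributivity. To see $P_2$ is prime, suppose $u\wedge v\in P_2$ with $u,v\notin P_2$; the case $u\wedge v=0$ is immediate from bipartiteness, so let $u\wedge v=w\in V_2\setminus\{0\}$. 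Then $w\le u$ with $w\in V_2$ rules out $u\in V_1$, since $u\in V_1$ would give $u\wedge w=0$ while $w\le u$ forces $u\wedge w=w\neq0$; as $u\notin P_2$ this makes $u$ a non-zero-divisor, and likewise $v$. But choosing $x\in V_1$ with $x\wedge w=0$, the element $t=x\wedge u$ is nonzero while $t\wedge v=x\wedge w=0$, exhibiting $v$ as a zero-divisor, a contradiction. Minimality of each $P_i$ is easy: any nonzero $a\in V_1$ has a $b\in V_2$ with $a\wedge b=0$ and $b\notin P_1$, so no prime ideal can lie strictly inside $P_1$. Finally $P_1\cap P_2=\{0\}$ because $V_1\cap V_2=\emptyset$.

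The main obstacle is the forward direction, and specifically proving that $P_1,P_2$ are prime. The difficulty is that the non-zero-divisors of $L$ are invisible to the graph, so the bipartite structure alone says nothing about how they meet the vertices; the annihilator identity $P_i=x^{\perp}$ is exactly what controls them, forcing every nonzero element below a vertex back into the same part, and the meet manipulation $t=x\wedge u$, $t\wedge v=x\wedge w=0$ is the device that turns a would-be failure of primality into an unwanted zero-divisor. I would also record the degenerate case: when $\{0\}$ is itself prime---equivalently $V(G_{\{0\}}(L))=\emptyset$---there is a unique minimal prime and no admissible pair $P_1\neq P_2$, so the statement should be read for graphs possessing at least one edge.
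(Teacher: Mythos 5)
Your proof is correct, but there is nothing in the paper to compare it against: the statement is imported verbatim as Theorem~2.14 of Joshi~\cite{7} and used as a black box (it is invoked only in the proof of Theorem~\ref{t6}, step $(d)\Rightarrow(a)$); the paper contains no proof of it. Judged on its own, your argument is a valid self-contained verification. The backward direction is the routine primality computation, and you are right that it never uses 0-distributivity. The forward direction correctly isolates the one place where 0-distributivity is indispensable: each part together with $0$ equals the annihilator $x^{\perp}$ of any vertex $x$ of the opposite part, hence is an ideal, since closure of annihilators under joins is exactly 0-distributivity; and your device $t=x\wedge u$ with $t\wedge v=x\wedge(u\wedge v)=x\wedge w=0$ correctly converts a failure of primality into a forbidden zero-divisor, the non-zero-divisors being precisely the elements the graph cannot see. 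Two small points worth recording: (i) primality requires $P_2\neq L$, which you should note explicitly (it holds because $P_2$ misses the nonempty set $V_1$); (ii) the present paper's definition of ``minimal prime ideal'' is stated in terms of prime ideals \emph{above} $P$ (an apparent slip, since it describes maximality among primes), whereas your minimality argument uses the standard definition --- no prime ideal strictly below --- which is the meaning relevant to Joshi's theorem, so your reading is the right one.
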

\begin{thm}\label{t6}Let $L$ be a lower dismantlable lattice having $(0,1)$ as an adjunct pair. Then the following statements are equivalent.
\begin{enumerate}
\item[$(a)$] $G_{\{0\}}(L)$ is a complete bipartite graph.
\item[$(b)$]$L$ is an adjunct of two chains only. \item[$(c)$] $L$
has exactly two atoms and exactly two dual atoms.\item[$(d)$]
There exist two minimal prime ideals $P_1$ and $P_2$ of $L$ such
that $P_1\cap P_2=\{0\}$. \item[$(e)$] $L$ is a
0-distributive lattice.\end{enumerate}
\end{thm}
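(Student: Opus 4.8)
The plan is to make $(b)$ the hub and prove each of the other four statements equivalent to it, the common engine being the following consequence of Lemma~\ref{l1}$(i)$: in a lower dismantlable lattice every principal filter $\uparrow\!e=\{z:z\ge e\}$ with $e\ne 0$ is a \emph{chain} (two incomparable elements above $e$ would meet in $0$, yet their meet is $\ge e>0$). Hence $L\setminus\{0\}$, ordered by covering, is a rooted tree with root $1$ whose minimal elements are exactly the atoms of $L$, and for distinct atoms $p,q$ the join $p\vee q$ is precisely the element where the chains $\uparrow\!p$ and $\uparrow\!q$ first merge, i.e.\ their least common ancestor in the tree. I will also record, by reading off the adjunct representation $L=C_0]_0^{x_1}\cdots]_0^{x_n}C_n$, that the atoms of $L$ are the bottoms of the chains while the lower covers of $1$ are the tops of those chains adjuncted at $(0,1)$ together with the coatom of the base; this is what turns $(b)\Leftrightarrow(c)$ into bookkeeping.

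First the spokes leading out of $(b)$. If $L=C_1]_0^1C_2$, then Corollary~\ref{c1} delivers $(a)$, its nonemptiness guaranteed by Lemma~\ref{l1}$(iii)$; Lemma~\ref{l2} delivers $(e)$; and feeding $(a)$ and $(e)$ into Theorem~\ref{t5} delivers $(d)$. For $(b)\Rightarrow(c)$ the count above shows an adjunct of exactly two chains at $(0,1)$ has exactly two atoms (the two chain-bottoms) and exactly two coatoms (the base coatom and $1_{C_2}$).

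Next the three spokes returning to $(b)$; each reduces to ``$L$ has at most two atoms,'' after which the bookkeeping fact collapses $L$ to an adjunct of two chains. For $(a)\Rightarrow(b)$: three distinct atoms are pairwise incomparable, hence pairwise adjacent in $G_{\{0\}}(L)$ by Lemma~\ref{l1}$(i)$, forming a triangle, which no complete bipartite graph contains. For $(d)\Rightarrow(b)$: applying primeness of $P_1$ to $p_i\wedge p_j=0$ forces all but at most one atom into $P_1$, and likewise into $P_2$; with three atoms, $2+2-3\ge 1$ of them would land in $P_1\cap P_2=\{0\}$, which is impossible. For $(c)\Rightarrow(b)$ one uses the atom count directly.

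The substantive step is $(e)\Rightarrow(b)$, which I expect to be the main obstacle. The naive move---feeding three atoms $p_1,p_2,p_3$ straight into $0$-distributivity---fails, since $p_1\wedge(p_2\vee p_3)$ need not be nonzero. The tree-order repairs this: set $m=p_1\vee p_2\vee p_3$, their least common ancestor; at least two of the three, say $p_i,p_j$, lie in different branches below $m$, so $p_i\vee p_j=m\ge p_k$ for the remaining atom $p_k$. Then $p_k\wedge p_i=p_k\wedge p_j=0$ while $p_k\wedge(p_i\vee p_j)=p_k\wedge m=p_k\ne 0$, contradicting $0$-distributivity; hence at most two atoms. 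Throughout I will invoke the standing non-degeneracy---$L$ is not a chain, equivalently $G_{\{0\}}(L)\ne\emptyset$, equivalently $1$ is join-reducible---which produces at least two atoms (two incomparable elements yield, via Lemma~\ref{l1}$(i)$, two distinct chain-bottoms), so that ``at most two'' sharpens to ``exactly two'' and every implication closes at $(b)$.
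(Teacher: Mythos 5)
Your proposal is correct, and it is organized genuinely differently from the paper's proof. The paper closes the cycle $(a)\Rightarrow(b)\Rightarrow(c)\Rightarrow(d)\Rightarrow(a)$ and attaches $(e)$ through $(a)\Leftrightarrow(e)$; you instead make $(b)$ the hub and reduce everything to the single invariant ``number of atoms equals number of chains in the adjunct representation,'' extracted from the filter-chain property that Lemma \ref{l1}$(i)$ forces on $[e)$ for $e\neq 0$. Your $(a)\Rightarrow(b)$ (three atoms give a triangle) coincides with the paper's, but three of your spokes have no counterpart there. Your $(d)\Rightarrow(b)$ --- primeness applied to $p_i\wedge p_j=0$ puts all but at most one atom into each $P_r$, so three atoms would force an atom into $P_1\cap P_2=\{0\}$ --- is a direct counting argument the paper never makes; the paper instead passes from $(d)$ to $(a)$ by quoting Theorem \ref{t5}. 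Conversely, where you reach $(d)$ by feeding $(a)$ and $(e)$ into Theorem \ref{t5}, the paper constructs the primes explicitly as the principal ideals $(d_1]$ and $(d_2]$ of the two dual atoms in its step $(c)\Rightarrow(d)$, which buys an explicit description of $P_1,P_2$ that your citation leaves implicit. Your arrangement buys a point of hygiene in exchange: you invoke Theorem \ref{t5} only after both of its hypotheses (0-distributivity and complete bipartiteness) are secured from $(b)$, whereas the paper's step $(d)\Rightarrow(a)$ cites Theorem \ref{t5} at a stage of the cycle where the 0-distributivity hypothesis of that theorem is not yet available. Finally, on the substantive implication out of $(e)$: the paper argues by cases on how the pairwise joins of three atoms relate (all equal, producing an $\mathcal{M}_3$; incomparable, excluded via the meet-irreducibility statement of Theorem \ref{t4}; comparable, yielding the contradiction), while your least-common-ancestor argument --- $p_1\vee p_2$ and $p_1\vee p_3$ both lie in the chain $[p_1)$, so the larger of them equals $m=p_1\vee p_2\vee p_3\geq p_k$, whence $p_k\wedge(p_i\vee p_j)=p_k\neq 0$ against $p_k\wedge p_i=p_k\wedge p_j=0$ --- handles all cases uniformly; it is a cleaner execution of the same underlying idea, namely choosing the pair of atoms whose join dominates the third so that 0-distributivity actually bites. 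Both proofs are sound; yours is tighter in its logical bookkeeping, the paper's more concrete about the prime ideals.
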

\begin{proof}$(a)\Rightarrow (b)$ Suppose $G_{\{0\}}(L)=K_{m,n}$. Note that the independent set of
$K_{m,n}$ forms a chain in $L$. If $L$ is an adjunct of more than
two chains, then $L$ contains  at least three
atoms, which forms a triangle in $G_{\{0\}}(L)$, a contradiction. Hence $L$ is adjunct of two chains only.
 Moreover, as $(0,1)$ is an adjunct pair, $L=C_1]_0^1C_2$, where $C_1$ and $c_2$ are chains.\\
$(b)\Rightarrow (c)$ Obvious.\\
$(c)\Rightarrow (d)$ Let $d_1$ and $d_2$ be only dual atoms of $L$.
Then $P_1=(d_1]=\{x\in L~:~x\leq d_1\}$ and $P_2=(d_2]$ are ideals of $L$. In fact, $P_1\cap P_2=\{0\}$,
otherwise we get a nonzero meet-reducible element
in $L$, a contradiction to Theorem \ref{t4}. We claim that $P_1$ and
$P_2$ are prime ideals. Let $a\wedge b\in P_1$, {\it i.e.}, $a\wedge
b\leq d_1$. If $a\wedge b\neq 0$, then $a$ and $b$ are comparable. Hence $a\in P_1$ or $b\in P_1$. Suppose $a\wedge b=0$ and $a,b\notin
P_1=(d_1]$. Then by Lemma \ref{l1}, $a||b$. Since $d_2$ is the only dual atom other that $d_1$, we have $a,b\leq d_2$, which gives $(0,d_2)$ is an
adjunct pair in $L$,
 hence there exist two atoms below $d_2$, {\it i.e.}, $L$ contains three atoms (one below $d_1$ and two below $d_2$), a contradiction.
 Therefore $P_1$ is a prime ideal. Similarly $P_2$ is a prime ideal.\\
$(d)\Rightarrow (a)$ Follows from Theorem 2.14 \cite{7}.\\
$(a)\Rightarrow (e)$ Let $G_{\{0\}}(L)$ be a complete bipartite graph. By the
equivalence of statements $(a)$ and $(b)$, $L$ is an adjunct of
exactly two chains. By Lemma \ref{l2}, $L$ is
0-distributive.\\
$(e)\Rightarrow (a)$ Suppose that $L$ is 0-distributive. Assume on the
contrary, assume that $L$ is an adjunct of more than two chains,
{\it i.e.}, $L$ contains at least three atoms, say $a,b,c$.
Clearly $a\wedge b=b\wedge c=a\wedge c=0$.
We consider the following two cases.\\
\textbf{Case $(1):$} If $a\vee b= a\vee c=b\vee c$, then $L$ contains a
sublattice isomorphic to $\mathcal{M}_3$ (as shown in Figure
\ref{f6}), a contradiction to
0-distributivity of $L$.\\
\textbf{Case $(2):$} Let $a\vee b$ $||$ $b\vee c$. Clearly $b\leq (a\vee
b)\wedge (b\vee c)$ is a nonzero meet-reducible element in $L$, a
contradiction to Theorem \ref{t4}. Hence $a\vee b$ and $b\vee c$ are
comparable. Without loss of generality, suppose $a\vee b\leq b\vee
c$. Hence $a\leq a\vee b\leq b\vee c$, which gives $a\wedge (b\vee
c)=a$, but $a\wedge b=0$ and $a\wedge c=0$, again a contradiction to
0-distributivity of $L$.

Thus in any case $L$ does not contain three atoms. Since $(0,1)$
as an adjunct pair, it is clear that $L$ is an adjunct of exactly two
chains. Therefore $G_{\{0\}}(L)$ is a complete bipartite
graph.\end{proof}

The following result is due to Joshi \cite{7}.
\begin{thm}[Theorem 2.4, Joshi \cite{7}]\label{t7}Let $L$  be a lattice. Then $G_{\{0\}}(L)$ is connected with
$diam\big(G_{\{0\}}(L)\big) \leq 3$.
\end{thm}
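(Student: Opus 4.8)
The plan is to adapt the standard connectedness argument for zero-divisor graphs, exploiting the fact that in a lattice the meet is order-preserving and idempotent, so that any element lying below an annihilator is itself an annihilator. Concretely, for two distinct vertices $x,y \in V\!\left(G_{\{0\}}(L)\right)$ I would exhibit a path of length at most $3$ joining them; connectedness together with the bound $diam\big(G_{\{0\}}(L)\big)\leq 3$ then follows immediately.

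First I would dispose of the trivial case: if $x\wedge y=0$ then $x$ and $y$ are adjacent and $d(x,y)=1$. So assume $x\wedge y\neq 0$. Since $x$ and $y$ are vertices, Definition \ref{d2} supplies nonzero elements $a,b\in L$ with $x\wedge a=0$ and $y\wedge b=0$. Each of $a,b$ is itself a vertex, having the nonzero annihilator $x$, respectively $y$, and $a\neq x$, $b\neq y$ because a nonzero element cannot annihilate itself.

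The key step is to branch on whether $a\wedge b=0$. If $a\wedge b\neq 0$, set $c=a\wedge b$. Since $c\leq a$ we get $x\wedge c\leq x\wedge a=0$, hence $x\wedge c=0$; symmetrically $y\wedge c\leq y\wedge b=0$, hence $y\wedge c=0$. Thus $c$ is a nonzero element annihilating both $x$ and $y$, so $c$ is a vertex distinct from $x$ and $y$, and $x-c-y$ is a path of length $2$. If instead $a\wedge b=0$, then $a$ and $b$ are adjacent, and I would check that the four elements $x,a,b,y$ are pairwise distinct; the only nonobvious comparisons are $x\neq b$ and $a\neq y$, both forced by the standing assumption $x\wedge y\neq 0$ (for instance $x=b$ would give $x\wedge y=b\wedge y=0$). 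Then $x-a-b-y$ is a path of length $3$.

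I expect the only care needed to be bookkeeping: verifying at each stage that the intermediate elements genuinely lie in $V\!\left(G_{\{0\}}(L)\right)$ and are distinct from the endpoints, so that the walks produced are honest paths. There is no serious obstacle, since the implication ``$c\leq a$ and $x\wedge a=0 \Rightarrow x\wedge c=0$'' makes the length-$2$ reduction in the case $a\wedge b\neq 0$ automatic, and it is exactly this reduction that prevents the diameter from exceeding $3$.
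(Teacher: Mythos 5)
Your proof is correct. Note, however, that the paper itself offers no proof of this statement: it is quoted verbatim as Theorem 2.4 of Joshi \cite{7} and used as an imported result, so there is no in-paper argument to compare yours against. Your argument is the standard Anderson--Livingston-style case analysis, and it is complete: the trivial adjacent case; the length-$2$ path through $c=a\wedge b$ when $a\wedge b\neq 0$, justified by monotonicity of meet ($x\wedge c\leq x\wedge a=0$); and the length-$3$ path $x-a-b-y$ when $a\wedge b=0$, with all the distinctness checks ($a\neq x$ and $b\neq y$ by idempotency of meet, $x\neq b$ and $a\neq y$ from $x\wedge y\neq 0$) carried out so that the walks are genuine paths. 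One remark on scope: Joshi's cited theorem is actually proved for zero-divisor graphs of posets with respect to an ideal, where meets need not exist and the argument must be run with lower cones (sets of common lower bounds) in place of $a\wedge b$; your proof exploits the lattice structure to make the same skeleton cleaner, which is exactly what the specialization to lattices buys.
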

The following result can be found in Alizadeh et al.
\cite{1}.
\begin{thm}[Theorem 4.2, Alizadeh \cite{1}]\label{t8}Let $L$ be a lattice, then $gr(G_{\{0\}}(L)\in
\{3,4,\infty\}$.
\end{thm}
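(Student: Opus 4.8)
The plan is to establish the dichotomy directly: either $G_{\{0\}}(L)$ is acyclic, in which case $gr(G_{\{0\}}(L))=\infty$, or it contains a cycle, and then I will show that its shortest cycle has length at most $4$. Thus everything reduces to excluding a shortest cycle of length $n\geq 5$. Assume for contradiction that $C\colon x_1-x_2-\cdots-x_n-x_1$ is a shortest cycle with $n\geq 5$. The first step is to record the chordlessness of a shortest cycle: if two non-consecutive vertices $x_i,x_j$ of $C$ were adjacent in $G_{\{0\}}(L)$, the chord would split $C$ into two cycles each of length $\geq 3$ and $<n$, contradicting minimality. Translated through Definition \ref{d2}, this says that $x_i\wedge x_j\neq 0$ whenever $x_i,x_j$ are non-consecutive on $C$, while $x_i\wedge x_{i+1}=0$ along the cycle.

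The key device is the auxiliary element $w=x_2\wedge x_4$. Since $x_2$ and $x_4$ are non-consecutive (here $n\geq 5$ is used), chordlessness gives $w\neq 0$, and since $w$ will annihilate a nonzero element it is a vertex of $G_{\{0\}}(L)$. Using only monotonicity of $\wedge$ together with the edge relations along $C$, I would verify the three annihilations $w\wedge x_1=w\wedge x_3=w\wedge x_5=0$: indeed $w\leq x_2$ forces $w\wedge x_1\leq x_2\wedge x_1=0$ and $w\wedge x_3\leq x_2\wedge x_3=0$, while $w\leq x_4$ forces $w\wedge x_5\leq x_4\wedge x_5=0$. A short check rules out the coincidences $w=x_1$ and $w=x_3$: from $w\leq x_2$ either equality would give $x_1\leq x_2$ (resp. $x_3\leq x_2$), making the edge-meet $x_1\wedge x_2$ (resp. $x_2\wedge x_3$) nonzero, which is absurd. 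In particular $w$ is a vertex adjacent to both $x_1$ and $x_3$.

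It remains to extract a cycle of length $\leq 4$, and the one delicate point is the possible coincidence $w=x_2$. If $w\neq x_2$, then $x_1,x_2,x_3,w$ are four distinct vertices and $x_1-x_2-x_3-w-x_1$ is a $4$-cycle, its edges being the annihilations $x_1\wedge x_2=x_2\wedge x_3=x_3\wedge w=w\wedge x_1=0$; this contradicts the minimality assumption $n\geq 5$. If instead $w=x_2$, then $x_2=x_2\wedge x_4\leq x_4$, so the annihilation $w\wedge x_5=0$ becomes $x_2\wedge x_5=0$; but $x_2$ and $x_5$ are non-consecutive on $C$ (again using $n\geq 5$), so chordlessness forces $x_2\wedge x_5\neq 0$, a contradiction. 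In either case the hypothesis $n\geq 5$ collapses, so a finite girth is at most $4$, proving $gr(G_{\{0\}}(L))\in\{3,4,\infty\}$.

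I expect the main obstacle to be exactly the bookkeeping of these degeneracies: confirming that $w$ is a bona fide vertex distinct from the cycle vertices to which it must be joined, and disposing of the exceptional case $w=x_2$ via the fifth vertex $x_5$. Everything else is a mechanical consequence of the monotonicity of $\wedge$ and the no-chord property of a minimal cycle; notably, no special feature of lower dismantlable lattices enters, in accordance with the fact that the statement is asserted for arbitrary lattices.
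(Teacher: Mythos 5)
This statement is an imported result: the paper quotes it from Alizadeh et al.\ (Theorem 4.2 of \cite{1}) and gives no proof of its own, so there is no in-paper argument to compare yours against; you have supplied the missing proof, and it is correct. The chordlessness of a shortest cycle, the choice of $w=x_2\wedge x_4$ (nonzero precisely because $x_2,x_4$ are non-consecutive when $n\geq 5$), the three annihilations $w\wedge x_1=w\wedge x_3=w\wedge x_5=0$ from monotonicity of $\wedge$, the exclusions $w\neq x_1$ and $w\neq x_3$, and the two-case split are all sound: if $w\neq x_2$ you exhibit the $4$-cycle $x_1-x_2-x_3-w-x_1$ (note your argument does not even require $w$ to avoid $x_4$ or $x_5$, since only distinctness from $x_1,x_2,x_3$ is needed), while if $w=x_2$ then $x_2\leq x_4$ forces the forbidden chord relation $x_2\wedge x_5=0$. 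Your closing observation is also apt: the argument uses nothing beyond the existence of meets, which matches the fact that the theorem is stated for arbitrary lattices; indeed, replacing the meet $x_2\wedge x_4$ by a nonzero common lower bound turns your proof into one valid for posets, which is the actual setting of the cited source.
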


In the following theorem, we characterize the diameter and girth
of $G_{\{0\}}(L)$ for a lower dismantlable lattice $L$.
\begin{thm}\label{t9}Let $L$ be a lower dismantlable lattice which is an adjunct of $n$ chains,
where $n\geq 2$. Then $V((G_{\{0\}}(L)))\neq\emptyset$ and $diam
(G_{\{0\}}(L))\leq 2$. Moreover if $(0,1)$ is the only adjunct pair
in $L$, then $diam (G_{\{0\}}(L))=1$ if and only if $L\cong
\mathcal{M}_n$. Further
\begin{enumerate} \item[$(a)$] $gr (G_{\{0\}}(L))=3$ if and only if
$L$ is an adjunct of at least three chains.\item[$(b)$] $gr
(G_{\{0\}}(L))=4$ if and only if $L=C_1 ] _0^aC_2$ with $|C_2|\geq
2$. \item[$(c)$]$gr (G_{\{0\}}(L))=\infty$ if and only if $L=C_1 ]
_0^aC_2$ with $|C_2|=1$.
\end{enumerate}\end{thm}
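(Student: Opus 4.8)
The unifying observation driving every part is that for a lower dismantlable lattice $L$ the graph $G_{\{0\}}(L)$ is simply the incomparability graph on the nonzero (equivalently, nonzero non-unit) elements: by Lemma \ref{l1}$(i)$, two nonzero elements are adjacent exactly when they are incomparable. Thus each graph-theoretic assertion translates into a statement about antichains of $L$, and the plan is to run all four claims through this dictionary. For $V(G_{\{0\}}(L))\neq\emptyset$, note that an adjunct of $n\geq 2$ chains has at least two atoms, namely the atom of the base chain and the bottom $0_{C_i}$ of any adjuncted chain; two atoms are incomparable, hence mutually adjacent, so the vertex set is nonempty.

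For $diam(G_{\{0\}}(L))\leq 2$ I would argue directly rather than merely invoke the bound $diam\leq 3$ of Theorem \ref{t7}. Take distinct vertices $u,v$. If $u\|v$ then $u\wedge v=0$, so $d(u,v)=1$. If they are comparable, say $u<v$, then, using that $v$ is a vertex, choose a nonzero $z$ with $v\wedge z=0$, i.e. $v\|z$. I claim $u\|z$: if $z\leq u$ then $z\leq u<v$ forces $v\wedge z=z\neq 0$, a contradiction; if $u\leq z$ then $u\leq v\wedge z=0$, contradicting $u\neq 0$. Hence $z$ is incomparable to both $u$ and $v$, so $u-z-v$ is a path and $d(u,v)\leq 2$. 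The equality $diam=1$ means $G_{\{0\}}(L)$ is complete, i.e. $L\setminus\{0,1\}$ is an antichain. Under the hypothesis that $(0,1)$ is the only adjunct pair we have $L=C_1]_0^1C_2]_0^1\cdots]_0^1C_n$, and by Lemma \ref{l1}$(iii)$ every nonzero non-unit element is a vertex; since elements of a single chain are pairwise comparable, $L\setminus\{0,1\}$ is an antichain iff each chain contributes exactly one such element, forcing $C_1=\{0,a_1,1\}$ and $|C_i|=1$ for $i\geq 2$, that is, $L\cong\mathcal{M}_n$. Conversely $G_{\{0\}}(\mathcal{M}_n)=K_n$ is complete, settling the ``Moreover'' clause.

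For the girth I would first use Theorem \ref{t8} to restrict to $gr\in\{3,4,\infty\}$ and then read everything off the comparability structure. If $L$ is an adjunct of at least three chains, their bottom atoms form a three-element antichain, hence a triangle, so $gr=3$; conversely a triangle is a three-element antichain, which is impossible when $L$ is an adjunct of only two chains, since there (as in Corollary \ref{c1}) $G_{\{0\}}(L)$ is complete bipartite with colour classes the vertices lying in $C_1$ and those lying in $C_2$, hence triangle-free. This proves $(a)$. For $(b)$ and $(c)$, where $L=C_1]_0^aC_2$ is an adjunct of exactly two chains, $G_{\{0\}}(L)=K_{p,q}$ with parts the nonzero elements of $C_1$ strictly below $a$ and the elements of $C_2$; by Theorem \ref{t8} the girth is $4$ precisely when both parts carry at least two vertices (a $4$-cycle appears) and $\infty$ precisely when one part is a single vertex (a star has no cycle).

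The main obstacle, and the step needing the most care, is the exact determination of the two parts of this complete bipartite graph. One must verify, via Lemma \ref{l1}$(ii)$, that the vertices contributed by $C_1$ are exactly the elements strictly between $0$ and the adjunct point $a$ — every element of $C_1$ with $x\geq a$ dominates all of $C_2$ and is therefore comparable to everything, hence not a vertex — and that each such $C_1$-vertex is incomparable to every element of $C_2$, so the bipartite graph is genuinely complete. Matching the resulting part sizes against the numerical conditions on $C_1$ and $C_2$ then delivers the girth dichotomy of $(b)$ and $(c)$; this counting is exactly the point where one must be scrupulous, since the girth is governed by whether \emph{each} colour class, and not merely $C_2$, contains two vertices.
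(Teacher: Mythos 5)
Your proposal is correct, and its skeleton is the same as the paper's: Lemma \ref{l1}$(i)$ as the dictionary between adjacency and incomparability, atoms for nonemptiness and for triangles, the complete bipartite shape of $G_{\{0\}}(L)$ when $L$ is an adjunct of exactly two chains, and Theorem \ref{t8} to confine the girth to $\{3,4,\infty\}$. Two of your local choices genuinely improve on the paper. For the clause ``$diam(G_{\{0\}}(L))=1$ if and only if $L\cong\mathcal{M}_n$'' the paper does a case analysis on $|C_1|$, $|C_2|$ for two chains and then appeals to ``the result follows by induction''; your argument (Lemma \ref{l1}$(iii)$ makes the vertex set $L\setminus\{0,1\}$, completeness makes it an antichain, and an antichain meets each chain of the adjunct representation in at most one element, forcing $|C_1|=3$ and $|C_i|=1$ for $i\geq 2$) is direct and needs no induction. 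In $(b)$/$(c)$ the paper simply cites Corollary \ref{c1} --- which is stated for an adjunct pair $(a,1)$, not $(0,a)$ --- while you re-derive the bipartition honestly, with parts $\{x\in C_1 : 0<x<a\}$ and $C_2$. Finally, your closing caution is more than caution: the girth really is governed by \emph{both} colour classes, and this is exactly where the theorem's own wording goes astray. Taken literally, $(b)$ is false: for $C_1=\{0\prec x\prec a\prec 1\}$ and $|C_2|=2$ the graph is the star $K_{1,2}$, of girth $\infty$, although $|C_2|\geq 2$. What you prove (and what the paper's proof, modulo an ``or'' that should be an ``and'', also in effect proves) is the corrected dichotomy: girth $4$ iff both parts have at least two vertices, girth $\infty$ iff either part is a singleton. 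So rather than asserting that the counting ``delivers the dichotomy of $(b)$ and $(c)$,'' you should say explicitly that $(b)$ and $(c)$ hold only after adding to $(b)$ the hypothesis $|V(G_{\{0\}}(L))\cap C_1|\geq 2$, and to $(c)$ the alternative $|V(G_{\{0\}}(L))\cap C_1|=1$; your approach supplies the proof of that amended statement.
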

\begin{proof}Let $a,b\in V((G_{\{0\}}(L)))$. If $a||b$, then by Lemma \ref{l1}, $a\wedge b=0$ and hence $a,b$ are adjacent. Further let $a$ and $b$ are comparable, say $a\leq
b$. Since $b\in V((G_{\{0\}}(L)))$, there is an
element $c(\neq 0)$ such that $b\wedge c=0$. Hence $a\wedge c=0$.
Thus we get a path $a-c-b$ of length $2$. This shows that $d(a,b)\leq
2$ and in any case $diam (G_{\{0\}}(L))\leq 2$.

Let $(0,1)$ be the only adjunct pair in $L$. If $L\cong
\mathcal{M}_n$, then $G_{\{0\}}(L)\cong K_n$. Hence $diam
(G_{\{0\}}(L))=1$. Conversely, suppose $diam (G_{\{0\}}(L))=1$. If
$L=C_1 ] _0^1C_2$, then   $G_{\{0\}}(L)\neq \emptyset $ if and
only if $|C_1|\geq 3 $ and $|C_2|\geq 1$. If $|C_1|\geq 3 $ and
$|C_2|= 2$ then for any $a,b\in C_2$ we have $d(a,b)=2$. Now, If
$|C_1|\geq 4$ and $|C_2|= 1$ then for any $a,b\in C_1\setminus
{\{0,1\}}$, again $d(a,b)=2$, a contradiction to the fact that
$diam (G_{\{0\}}(L))=1$. Therefore $|C_1|= 3 $ and $|C_2|= 1$ and
the result follows by induction.

By Theorem \ref{t8}, $gr(G_{\{0\}}(L)\in
\{3,4,\infty\}$.\\
$(a)$ If $L$ is an adjunct of at least three chains, then it
contains at least three atoms. Hence $G_{\{0\}}(L)$ contains a
triangle, and $gr (G_{\{0\}}(L))=3$. Conversely, let $gr
(G_{\{0\}}(L))=3$. If $L=C_1 ] _0^aC_2$, then by Corollary
\ref{c1}, $G_{\{0\}}(L)$ is complete bipartite. Therefore it does
not contain an odd cycle, a contradiction. Hence $L$ is
adjunct of at least three chains.\\
$(b)$ and $(c)$ If $L=C_1 ] _0^aC_2$, then by Corollary \ref{c1},
$G_{\{0\}}(L)$ is a complete bipartite graph. Then $|C_2|=1$ or
$|V(G_{\{0\}})(L)\cap C_1|=1$ if and only if $gr
(G_{\{0\}}(L))=\infty$. If $|C_2|\geq 2$ or $|V(G_{\{0\}})(L)\cap C_1|\geq 2$, then $gr
(G_{\{0\}}(L))=4$.
\end{proof}
\begin{rem}Note that if we drop the condition of lower dismantlability of $L$, then $diam(G_{\{0\}}(L))$ may exceed 2.
Consider  a lattice $L=C_1]_0 ^e C_2]_b ^1C_3]_0 ^d C_4$, where $C_1=\{0,a,e,1\}$, $C_2=\{b\}$, $C_3=\{d\}$ and
$C_4=\{c\}$. Then $diam (G_{\{0\}}(L))=3$ as shown in Figure
\ref{f2}.
\end{rem}

 Now, we give a realization of zero-divisor graphs of lower dismantlable lattices, \textit{i.e.},
we describe graphs that are the zero-divisor graphs of lower dismantlable lattices.

\begin{proof}[\bf{Proof of Theorem \ref{t1}}] $(a)\Rightarrow(b)$ Let $G\in \mathcal{G_T}$. Hence $G=V(T\backslash \{R\})$ for some rooted tree $T$
with the root $R$. Let $L=V(G)\cup \{R\}\cup\{0\}$. Define a relation $\leq$
on $L$ by, $a\leq R$, $0\leq a$ and $a\leq a$, for every $ a\in
L$. If $a\neq b$, then $a<b$ if and only if $b$ is an ancestor of
$a$. Clearly, $(L, \leq)$ is a poset. If $a||b$, then no one is
ancestor of the other, hence $0$ is the only element below $a$ and
$b$, \textit{i.e.}, $a\wedge b=0$. Let $A=\{c\in L~|~ c
\textnormal{ is common ancestor of} ~a~ \textnormal{and }~ b
\}$. Then $A\neq \emptyset$, as $R\in A$. We claim that the set
$A$ forms a chain. Let $x,y\in A$ with $x||y$. Hence $x$ and $y$
 are ancestors of $a$ and $b$ both. But then $a-x-b-y-a$ is a cycle in the undirected graph of a rooted tree, a contradiction.
 Thus $A$ is a chain. Then the smallest element of $A$ (it exists due to finiteness of $L$) is nothing but $a\vee b$. Hence $L$ is a lattice with the
 greatest element $R$, now denoted by 1.
 Since meet of any two incomparable elements is zero, $L$ does not contain a crown. Hence by Theorem \ref{t3}, $L$ is a dismantlable lattice,
 say $L= C_0]_{a_1}^{b_1} C_1]_{a_2}^{b_2}\cdots]_{a_n}^{b_n} C_n$. Since meet of any two incomparable elements is zero,
 we get $a_i=0$, $\forall i$. Therefore $L$ is a lower dismantlable lattice having 1 as join-reducible element
(since the root of tree has at least two branches, hence $1$ is join-reducible). Also, $a\wedge b=0$ if and only if no
one is an ancestor of the other. Therefore $G=G_{\{0\}}(L)$.\\
$(b)\Rightarrow(c)$ Follows by Theorem \ref{t4}.\\
$(c)\Rightarrow(a)$ Let $L$ be a lower dismantlable lattice. Let $G$ be an incomparability graph of $L\backslash \{0,1\}$,
{\it i.e.}, $G= C(L\backslash \{0,1\})^c$. Then Theorem \ref{t4}, the
cover graph of $L\backslash \{0\}$ is a rooted tree, say $T$, with a root $1$. Let $H$ be the non-ancestor graph of $T$.
Then clearly, $V(H)=V(G)$ and $a$ and $b$ are adjacent in $G$ if and only if $a||b$
 if and only if no one is ancestor of the other if and only if $a$ and $b$ are adjacent in $H$. Hence $G=H$.\end{proof}

\textbf{Acknowledgements:} The authors are grateful to the referee for fruitful suggestions. The first author is financially supported by University Grant Commission, New Delhi via minor research project File No. 47-884/14(WRO).


\begin{thebibliography}{99}
\bibitem{1} M. Alizadeh,  A. K. Das, H. R. Maimani, M. R. Pournaki, S. Yassemi: {\it On the diameter and girth of zero-divisor graphs of posets},
 Discrete Appl. Math. {\bf 160} (2012), 1319-1324.

\bibitem{DP} D. F. Anderson, P. S. Livingston: {\it The zero-divisor graph of a commutative ring}, J. Algebra, \textbf{217} (1999),434-447.

\bibitem{2} I. Beck: \textit{Colouring of a commutative ring},
 J. Algebra \textbf{116} (1988), 208-226.


\bibitem{19} P. A. Grillet, J. C. Varlet: {\it Complementedness conditions in
lattices}, Bull. Soc. Roy. Sci. Le$\hat{e}$ge \textbf{36} (1967),
628-642.

\bibitem{4}R. Hala\v {s}, M. Jukl: \textit{On Beck's
colouring of posets}, Discrete Math. \textbf{309} (2009),
4584-4589.

\bibitem{5} R. Hala\v {s}, H. L\"{a}nger: \textit{The zero
divisor graph of a qoset}, Order \textbf{27} (2010), 343-351.

\bibitem{21} V. V. Joshi: \textit{On forbidden configuration of 0-distributive
lattices}, Math. Bohem. \textbf{134} (2009), no. 1, 59-65.
\bibitem{7} V. V. Joshi: \textit{Zero divisor graph of a poset
with respect to an ideal}, Order {\bf 29}(3)(2012), 499-506.

\bibitem{8} V. V. Joshi, A. U. Khiste: \textit{On the zero divisor graph of a Boolean poset}, Math.
Slovaca \textbf{64} (2014), no. 2, 511-519.

\bibitem{20} V. V. Joshi, B. N. Waphare: {\it Characterizations of
0-distributive posets}. Math. Bohem. \textbf{130} (2005), no. 1,
73-80.

\bibitem{9} V. V. Joshi, B. N. Waphare, H. Y. Pourali:
{\it Zero divisor graphs of lattices and primal ideals},  Asian-Eur. J. Math. {\bf 5} (2012), no. 3, 1250037, 9 pp.


\bibitem{jwp} V. V. Joshi, B. N. Waphare, H. Y. Pourali: {\it On generalized zero divisor graph of a poset}, Discrete
Appl. Math. {\bf 161} (2013), no. 10-11, 1490-1495.

\bibitem{10} D. Kelly and I. Rival: {\it Crowns, fences, and dismantlable lattices}, Canad. J. Math. {\bf 26} (1974), 1257-1271.

\bibitem{11} J. D. LaGrange: \textit{Complemented zero divisor graphs and Boolean
rings}, J. Algebra \textbf{315} (2007), 600-611.

\bibitem{12} J. D. LaGrange: {\it On realizing zero divisor
graphs}, Comm. Algebra  \textbf{36} (2008), 4509-4520.

\bibitem{13} D. Lu, T. Wu: {\it The zero divisor graphs of
posets and an application to semigroups}, Graphs Combin.
\textbf{26} (2010), 793-804.


\bibitem{15} S. K. Nimbhorkar, M. P. Wasadikar, L. DeMeyer:
\textit{Coloring of semilattices}, Ars Combin. \textbf{84} (2007),
97-104 .


\bibitem{17} I. Rival: {\it Lattices with doubly irreducible elements}, Canad. Math. Bull. {\bf 17} (1974), 91-95.

\bibitem{22} P. A. Survase: {\it A Study of graphs associated with
lattices and related structures}, Ph. D. Thesis submitted to Dr.
Babasaheb Ambedkar Marathwada University, Aurangabad (MS) 2013.
\bibitem {18}  N. K. Thakare, M. M. Pawar and B. N. Waphare: {\it A structure theorem for dismantlable lattices and enumeration} Period. Math. Hungar.
{\bf 45} (2002), no. 1-2, 147-160.

\bibitem{6}Douglas. B. West: {\it Introduction to
Graph Theory}, Second Edition, Prentice-Hall of India, New Delhi
2002.
\end{thebibliography}
 \end{document}